\newtheoremstyle{plain}
  {\medskipamount}
  {\smallskipamount}
  {\slshape}
  {0pt}
  {\bfseries}
  {.}
  { }
  {\thmname{#1}\thmnumber{ #2}{\normalfont\thmnote{ (#3)}}}
\newtheorem{theorem}             {Theorem}[section]
\newtheorem{lemma}     [theorem] {Lemma}
\newtheorem{definition}[theorem] {Definition}   
\newtheorem{corollary}[theorem]  {Corollary}
\newtheorem{claim}[theorem]      {Claim}
\newtheorem*{theorem*}{Theorem}
\newtheorem*{lemma*}{Lemma}
\def\Pr{\mathop{\text{\rm Pr}}\nolimits}
\def\var{\mathop{\text{\rm Var}}\nolimits}
\def\bin{\mathop{\text{\rm Bin}}\nolimits}
\def\bbe{{\mathbb E}}
\def\bbn{{\mathbb N}}
\def\cald{{\mathcal D}}
\def\calf{{\mathcal F}}
\def\calg{{\mathcal G}}
\def\calp{{\mathcal P}}
\let\epsilon\varepsilon
\newcommand{\deltastar}{\delta^*} 
\newcommand{\eps}{\varepsilon}
\def\Pr{\mathop{\text{\rm Pr}}\nolimits}
\def\var{\mathop{\text{\rm Var}}\nolimits}
\newcommand{\set}[2][]{#1\{#2#1\}}
\newcommand{\paren}[2][]{#1(#2#1)}
\newcommand{\floor}[2][]{#1\lfloor#2#1\rfloor}
\newcommand{\aas}{a.a.s.}
\newcommand{\mean}[1]{\mathbb{E}\left[#1\right]}
\newcommand{\deltain}{\delta^{in}}
\newcommand{\deltaout}{\delta^{out}}
\newcommand{\Dnp}{\cald(n,p)}
\newcommand{\din}{d^{in}}
\newcommand{\dout}{d^{out}}
\title{Packing arborescences in random digraphs}
\begin{document}

\author{%
  Carlos Hoppen%
  \thanks{Instituto de Matem\'{a}tica e Estat\'{i}stica, Universidade Federal do Rio Grande
    do Sul. Partially supported by CNPq (Proc.~448754/2014-2 and~308539/2015-0), FAPESP (Proc.~2013/03447-6) and NUMEC/USP (Project MaCLinC/USP).}%
  \and%
  Roberto F.~Parente%
  \thanks{Departamento de Ci\^encia da Computação, Universidade Federal da Bahia. Supported by CNPq (140987/2012-6) 
  while a doctoral student at Universidade de São Paulo.} \and%
  Cristiane M.~Sato%
  \thanks{Centro de Matemática, Computação e Cognição, Universidade
    Federal do ABC. Partially supported by FAPESP
    (Proc.2103/03447-6).}%
}

\date{\today}


\pagestyle{plain}

\thispagestyle{empty}

\maketitle
\begin{abstract}
We study the problem of packing arborescences in the random digraph $\mathcal D(n,p)$, where each possible arc is included uniformly at random with probability $p=p(n)$. Let $\lambda(\mathcal D(n,p))$ denote the largest integer $\lambda\geq 0$ such that, for all $0\leq \ell\leq \lambda$, we have $\sum_{i=0}^{\ell-1} (\ell-i)|\{v: d^{in}(v) = i\}| \leq \ell$. We show that the maximum number of arc-disjoint arborescences in $\mathcal D(n,p)$ is $\lambda(\mathcal D(n,p))$ \aas{} We also give tight estimates for $\lambda(\mathcal D(n,p))$ depending on the range of $p$.
\end{abstract}

\section{Introduction and main result}

Many important problems in discrete mathematics deal with packing
structures with some desired property into a larger structure, and
their goal is typically to find as many disjoint structures with the
desired property as possible. Several classical results in
combinatorial optimization fit into this general framework. For
instance, the maximum matching problem can be seen as packing
vertex-disjoint edges. We also highlight Tutte's~\cite{Tutte61} and
Nash-Williams's~\cite{NashWilliams61} results on packing spanning
trees, as well as Menger's~\cite{Menger27} and
Mader's~\cite{Mad78} results on packing paths. See the book by
Cornu{\'e}jols~\cite{Co01} for many more examples.

Given the extensive literature on this topic, it is only natural that
there is a great number of packing results in extremal combinatorics
and random structures. For instance, the problem of packing
Hamiltonian cycles in random structures has been studied quite
intensively since the 1980s (see~\cite{SoKrSu11, Bo84,BoFr85, FrKr08,
  KnKuOs15, KrSa12, KuOs14}). In the particular case of digraphs, some
significant results have been obtained recently
(see~\cite{FekwSu17, FL,FeNeNoPeSk14}).

 Recently, Gao,
{P\'{e}rez-Gim\'{e}nez} and the third author~\cite{GPS14-RSA,GPS14} obtained
results concerning packing spanning trees in random graphs. As usual,
given a function $p \colon \mathbb{N} \rightarrow [0,1]$ and a
positive integer $n$, we let $\calg(n,p)$ be the random graph on
$[n]=\{1,\dotsc, n\}$ such that each edge is included independently
with probability~$p$. Moreover, given a sequence of probability spaces
$(\Omega_i, \calf_i, \Pr_i)_{i\in \mathbb{N}}$, we say that a sequence
of events $(A_i)_{i\in\mathbb{N}}$ holds \emph{asymptotically almost
  surely} (\aas{} for short) if $\Pr_n(A_n)\to 1$ as $n\to\infty$.
\begin{theorem}[Pu--P{\'e}rez-Gim{\'e}nez--Sato\footnote{The result stated here is not the
  strongest result obtained in~\cite{GPS14-RSA,GPS14}.}~\cite{GPS14-RSA,GPS14}]\label{thm:treepack}
  For $p = p(n)\in [0,1]$, the maximum number of
  edge-disjoint spanning trees in $\calg(n,p)$ is  \aas{}
$$\min\set[\big]{
      \delta(\calg(n,p)),
      \floor[\big]{m(\calg(n,p))/(n-1)}}.
  $$
\end{theorem}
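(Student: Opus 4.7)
The plan is to invoke the Nash-Williams--Tutte theorem: a graph $G$ contains $k$ edge-disjoint spanning trees if and only if for every partition $\calp=\{V_1,\dotsc,V_r\}$ of $V(G)$ the number $e_G(\calp)$ of edges whose endpoints lie in distinct parts satisfies $e_G(\calp)\geq k(r-1)$. Write $k^*=\min\{\delta(\calg(n,p)),\lfloor m(\calg(n,p))/(n-1)\rfloor\}$. The upper bound $k\leq k^*$ is immediate, since each of the $k$ edge-disjoint spanning trees contains $n-1$ edges and uses at least one edge incident to every vertex. The real task is to prove a.a.s.\ that every partition certifies the matching lower bound $k=k^*$ via Nash-Williams--Tutte.

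I would split the argument according to which of $\delta$ and $\lfloor m/(n-1)\rfloor$ binds. In the sparse regime, where $\delta\leq\lfloor m/(n-1)\rfloor$, the tight partition is the one that isolates a single vertex of minimum degree: $r=2$ and $e_G(\calp)=\delta=k^{*}(r-1)$. For every other partition it must be shown that $e_G(\calp)-k^{*}(r-1)$ is strictly positive a.a.s. In the dense regime, where $\lfloor m/(n-1)\rfloor<\delta$, the critical partitions are balanced ones with a bounded number of parts, for which Chernoff-type concentration of $e_G(\calp)$ around its expectation $p\bigl(\binom{n}{2}-\sum_i\binom{|V_i|}{2}\bigr)$ together with concentration of $m$ itself yields what is needed.

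The central obstacle is that the set of partitions of $[n]$ has super-exponential cardinality, so a naive union bound is hopeless. The remedy I would use is a local-modification reduction: given a putative partition $\calp$ that violates the Nash-Williams--Tutte inequality with $k=k^{*}$, apply operations such as moving a vertex to the part containing most of its neighbours, or merging two parts, and verify that these operations do not increase the deficit $k^{*}(r-1)-e_G(\calp)$. Iterating drives $\calp$ into a structurally restricted family, typically consisting of partitions with at most one or two singleton parts at low-degree vertices plus a balanced split of the remainder into $O(1)$ classes; the cardinality of this family is only polynomial or sub-exponential in $n$. Over this much smaller family, Chernoff-type concentration of cut sizes, together with structural facts about the low-degree vertices of $\calg(n,p)$ in the appropriate range of $p$ (in particular that they form an independent set with pairwise disjoint neighbourhoods whenever they exist), makes the union bound affordable and delivers the required lower bound on $k$.
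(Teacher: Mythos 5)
This theorem is imported from Gao, P\'erez-Gim\'enez and Sato~\cite{GPS14-RSA,GPS14}; the present paper states it without proof, so there is no in-text argument to compare your proposal against. The paper does note that the Nash-Williams--Tutte theorem is the main tool of the cited proof, which matches your choice, and your upper-bound observation is the one the paper records as well.

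Your lower-bound plan, however, has a direction error and a large unargued step. The reduction is supposed to carry a violating partition to a canonical partition that is \emph{still} violating; for that, the modifications must \emph{not decrease} the deficit $k^*(r-1)-e_G(\calp)$, whereas you propose to verify that they ``do not increase'' it, which would permit the deficit to fall to zero and make the reduced partition useless as a witness. The moves you list, correctly restricted, do go the right way: shifting a vertex to its majority part lowers $e_G(\calp)$ at fixed $r$, and merging two parts joined by at most $k^*$ crossing edges lowers $e_G(\calp)$ by at most $k^*$ while lowering $r$ by one, so both weakly raise the deficit. But an unconstrained merge can lower the deficit, so the admissible moves must be specified with care, and the monotonicity you say you will verify is the opposite of the one you need. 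More importantly, the claim that iterating these moves lands every violating partition in a family of only polynomial (or subexponential) size --- a balanced $O(1)$-part split plus a few singletons at low-degree vertices --- is the entire substance of the proof; it is stated, not proved, is false for general graphs, and would have to be extracted from random-graph structure. The route this paper itself takes for its digraph analogue (Lemma~\ref{lemma:TeoPrinc_Ranges}), and which I expect mirrors the cited argument, avoids any modification step: one bounds each part's contribution in place, using a light-vertex lemma (minimum-degree vertices are pairwise nonadjacent with disjoint neighbourhoods) for small parts, an edge-expansion estimate for linear-sized parts, and an explicit accounting of singleton parts at low-degree vertices against a budget controlled by the parameter being matched. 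That route keeps the union bound tractable without any monotonicity argument and is the one worth studying if you want to complete this proof.
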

It is easy to see that $\delta(\calg(n,p))$ and
$\floor[\big]{m(\calg(n,p))/(n-1)}$ are upper bounds for the number of
edge-disjoint spanning trees since every spanning tree has at least
one edge incident to every vertex and has exactly $n-1$ edges. The
following classical result proved by Tutte and Nash-Williams is the
main tool in~\cite{GPS14-RSA,GPS14} to prove that the maximum is achieved by one
of these two parameters.

\begin{theorem}[Tutte~\cite{Tutte61} and Nash-Williams~\cite{NashWilliams61}]\label{thm_NW}
Given a graph $G = (V,E)$ and an integer $k\geq 0$, $G$ contains $k$
  edge-disjoint spanning trees if and only if, for every partition of
  $V$ with $\ell$ parts, the number of edges with ends in different
  parts is at least $k(\ell-1)$.
\end{theorem}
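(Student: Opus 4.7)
The plan is to prove both directions of the theorem: necessity is a short counting argument, while sufficiency will be obtained from the Edmonds--Nash-Williams matroid union theorem applied to the graphic matroid of $G$.

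For necessity, fix $k$ edge-disjoint spanning trees $T_1,\ldots,T_k$ of $G$ and an arbitrary partition $\calp=\{V_1,\ldots,V_\ell\}$ of $V$. For each $i$, contracting each part of $\calp$ to a single vertex in $T_i$ produces a connected multigraph on $\ell$ vertices, which must contain at least $\ell-1$ edges; these are exactly the edges of $T_i$ joining distinct parts of $\calp$. Summing over $i$ and using edge-disjointness of the $T_i$'s gives the required lower bound of $k(\ell-1)$ on the number of crossing edges of $G$.

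For sufficiency, the plan is to identify a collection of $k$ edge-disjoint spanning trees with an independent set of size $k(n-1)$ in the union of $k$ copies of the graphic matroid $M(G)$. The matroid union theorem then gives the rank of this union as
$$\min_{F\subseteq E}\bigl(|E\setminus F|+k\cdot r(F)\bigr),$$
where $r(F)=n-c(F)$ is the rank of $F$ in $M(G)$ and $c(F)$ is the number of components of the spanning subgraph $(V,F)$. Hence such an independent set exists if and only if $|E\setminus F|\geq k(c(F)-1)$ for every $F\subseteq E$. The argument is then closed by verifying that this matroid-theoretic condition is equivalent to the partition condition in the statement: given a partition $\calp$ with $\ell$ parts, take $F$ to be the set of non-crossing edges (so that $c(F)\geq \ell$ and $E\setminus F$ is exactly the set of crossing edges); and conversely, given $F\subseteq E$, take $\calp$ to be the partition into the $c(F)$ components of $(V,F)$, so that all crossing edges of $\calp$ automatically lie in $E\setminus F$.

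The main obstacle is the matroid union theorem itself, which is nontrivial and typically proved via an exchange argument on rank functions (or equivalently by LP-duality for polymatroids). A more self-contained alternative that bypasses matroid union would be to take edge-disjoint forests $F_1,\ldots,F_k$ in $G$ maximizing $\sum_i|F_i|$, assume for contradiction that some $F_j$ is not a spanning tree, and use the components of $F_j$ as a partition $\calp$; one then invokes an augmenting-sequence argument, iteratively reassigning edges among the $F_i$'s, to eventually extend $F_j$ or to contradict the partition condition directly.
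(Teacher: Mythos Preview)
The paper does not give a proof of this statement; Theorem~\ref{thm_NW} is quoted as a classical result of Tutte and Nash-Williams and used only as background motivation for the directed analogue (Theorem~\ref{teo:Frank}), which is likewise cited without proof. So there is nothing in the paper to compare your argument against.

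That said, your proposal is a standard and correct derivation. The necessity argument is fine. For sufficiency, the reduction to the matroid union theorem is accurate: an independent set of size $k(n-1)$ in the union of $k$ copies of $M(G)$ decomposes into $k$ forests whose sizes sum to $k(n-1)$, forcing each to be a spanning tree; the rank formula then reduces existence to $|E\setminus F|\geq k(c(F)-1)$ for all $F\subseteq E$, and your two-way translation between this and the partition condition is correct (for $F\subseteq E$ take $\calp$ to be the components of $(V,F)$; for a partition $\calp$ take $F$ to be the non-crossing edges and use $c(F)\geq |\calp|$). The only caveat, which you already flag, is that the matroid union theorem is itself a substantial input; the alternative augmenting-sequence sketch you mention is closer in spirit to the original proofs and would make the argument self-contained, but as written it is only a hint rather than a proof.
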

It is quite natural that this result (which is actually a min-max
relation) can be successfully used for random graphs since the
partition condition is essentially an expansion condition and random
graphs are well known to have nice expansion properties.

Our main result is an analogue of Theorem~\ref{thm:treepack} for
digraphs. A \emph{digraph} $D=(V,A)$ is given by its finite set $V$ of
\emph{vertices} and its set $A \subset \{(u,v) \in V^2 \colon u \neq
v\}$ of \emph{arcs}. We say that an arc $(u,v)$ \emph{leaves} $u$ and
\emph{enters} $v$, or, alternatively, that it points at $v$. The
\emph{underlying graph} of a digraph $D=(V,A)$ is the graph (actually
multigraph) obtained by ignoring orientations on arcs. Our result
deals with packing arborescences, which are an analogue of spanning
trees in digraphs. Indeed, an \emph{arborescence} of a digraph is a
spanning sub-digraph such that its underlying graph is a rooted tree
and each vertex except the root has in-degree $1$ and the root has
in-degree zero. Roughly speaking, an arborescence is a spanning tree
with the arcs ``pointing away'' from the root. Let $\Dnp$ denote the
random digraph on $[n]=\{1,\dotsc, n\}$ such that each arc is included
independently at random with probability~$p$. Let $\uptau(\Dnp)$
denote the maximum number of arc-disjoint arborescences in $\Dnp$. For
every digraph $D$ and $v\in D$, let the \emph{in-degree} $\din_D(v)$
of $v$ be the number of arcs entering $v$ in $D$, while the
\emph{out-degree} $\dout_D(v)$ of $v$ is the number of arcs leaving
$v$ in $D$. Our main result may be stated as follows.
\begin{theorem}\label{Th:MainResult}
  For every $p = p(n) \in [0,1]$, the maximum number of arc-disjoint arborescences in $\Dnp$ \aas{} satisfies
\begin{align}
  \uptau(\cald(n,p)) 
  =
  \lambda(\Dnp),
\end{align}
where $\lambda(\Dnp)$ is the maximum integer $\lambda\geq 0$ such
  that, for all $0\leq \ell\leq \lambda$,
\begin{equation}
  \label{eq:lambdadef}
  \sum_{i=0}^{\ell-1} (\ell-i)|\{v\colon \din_{\Dnp}(v) = i\}|
  \leq 
  \ell.
\end{equation}
Moreover,
\begin{itemize}
\item[(a)] if $p = (\log(n)-h(n))/(n-1)$ with $h(n) = \omega(1)$, then
  $\lambda(\Dnp) = 0$ \aas;

\item[(b)]  if $p = (\log(n)+h(n))/(n-1)$ with
$h(n) = O(\log \log n)$, then $\lambda(\Dnp) \in
\{\deltain,\deltain+1\}$ \aas;
\item[(c)]  if $p = (\log(n)+h(n))/(n-1)$ with $h(n) = o(\log n)$ and
$h(n) = \Omega(\log \log n)$, then $\lambda(\Dnp) \sim \deltain$ \aas
\end{itemize}
\end{theorem}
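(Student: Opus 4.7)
The plan has three stages: a deterministic upper bound $\uptau(D)\le\lambda(D)$, a matching lower bound for $\Dnp$ via Frank's characterisation of arborescence packing combined with the expansion of $\Dnp$, and a direct analysis of the in-degree distribution of $\Dnp$ in each of the three regimes~(a)--(c).

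First I would establish $\uptau(D)\le\lambda(D)$ for every digraph $D$. If $D$ admits $\ell$ pairwise arc-disjoint arborescences and vertex $v$ is the root of $r_v$ of them, then in each of the other $\ell-r_v$ arborescences $v$ uses a distinct arc entering it, so $\din_D(v)\ge\ell-r_v$ and hence $r_v\ge\max\set{0,\ell-\din_D(v)}$. Summing over $v$ and using $\sum_v r_v=\ell$ yields exactly~\eqref{eq:lambdadef}, so $\ell\le\lambda(D)$.

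For the matching inequality $\uptau(\Dnp)\ge\lambda(\Dnp)$ a.a.s., I would appeal to Frank's theorem on packing arborescences, the directed analogue of Theorem~\ref{thm_NW}: a digraph $D$ contains $k$ arc-disjoint arborescences iff
\[
  \sum_{j=1}^{t}\max\set{0,\,k-d^{in}(V_j)}\le k
  \quad\text{for every subpartition }\set{V_1,\dots,V_t}\text{ of }V.
\]
The singleton subpartition is precisely~\eqref{eq:lambdadef}, which holds for $k=\lambda(\Dnp)$ by definition. For any subpartition containing a part $V_j$ with $|V_j|=s\ge 2$, the in-cut $d^{in}(V_j)\sim\Bi(s(n-s),p)$ has mean of order $snp$, whereas $\lambda(\Dnp)=O(np)$ in the relevant regime; a Chernoff bound together with a union bound over subsets of each size show that a.a.s.\ every such part satisfies $d^{in}(V_j)>\lambda(\Dnp)$, so its contribution to Frank's sum vanishes. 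Hence a.a.s.\ only the singleton subpartition can be binding, giving $\uptau(\Dnp)\ge\lambda(\Dnp)$.

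For parts~(a)--(c) one analyses the in-degree distribution $\din(v)\sim\Bi(n-1,p)$. In~(a), $(1-p)^{n-1}\sim e^{h(n)}/n$, so the expected number of in-degree-zero vertices is $e^{h(n)}=\omega(1)$; a second-moment computation produces at least two such vertices a.a.s., and then \eqref{eq:lambdadef} already fails for $\ell=1$, yielding $\lambda(\Dnp)=0$. In~(c), Poisson approximation concentrates $|\set{v\colon\din(v)=i}|$ around its mean for $i$ near $\deltain$, and the left-hand side of \eqref{eq:lambdadef} transitions from $o(\deltain)$ to $\omega(\deltain)$ as $\ell$ crosses $\deltain$, giving $\lambda\sim\deltain$. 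In~(b), where the available slack is only a bounded multiplicative factor, the same calculation locates $\lambda$ only to within one, yielding $\lambda\in\set{\deltain,\deltain+1}$. The main difficulty I expect is Stage~2 at the sparse end of the parameter range, where the mean in-degree is only of order $\log n$ and the Chernoff margin between $d^{in}(V_j)$ and $\lambda(\Dnp)$ is narrow for small parts $|V_j|=2,3,\dots$. I anticipate that a crude union bound is insufficient and that one must argue, for each candidate violating subpartition, that its total defect $\sum_j\max\set{0,k-d^{in}(V_j)}$ is upper bounded by the contribution of its small-part vertices in the singleton subpartition---so that any violation of Frank's condition would already force a violation of~\eqref{eq:lambdadef} itself and is therefore excluded by the definition of $\lambda$.
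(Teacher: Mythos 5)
Your overall architecture matches the paper's: the deterministic inequality $\uptau(D)\le\lambda(D)$ is exactly the root double-count the paper sketches in the introduction, Frank's theorem (Theorem~\ref{teo:Frank}) supplies the lower bound, your reformulation $\sum_j\max\{0,k-\din(V_j)\}\le k$ is equivalent to the paper's $\sum_j\din(V_j)\ge k(|\calp|-1)$, and your sketches for parts (a)--(c) align with Section~\ref{ssec:lambda}, which carries them out via the auxiliary quantity $\deltastar$.

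The gap is in Stage~2 and is more serious than you suggest. Your proposed repair of the failing union bound --- showing that the defect of any subpartition is dominated by the defect of the singleton subpartition --- would require $\max\{0,k-\din(V_j)\}\le\sum_{v\in V_j}\max\{0,k-\din(v)\}$ for each multi-vertex part $V_j$, and that inequality is simply false: since $\din(V_j)=\sum_{v\in V_j}\din(v)-|A[V_j]|$, a part rich in internal arcs can have $\din(V_j)<k$ while every $v\in V_j$ satisfies $\din(v)\ge k$, making the left side positive and the right side zero. So a violation of Frank's condition need not force a violation of~\eqref{eq:lambdadef}; one must instead show directly that a.a.s.\ no multi-vertex set $S$ of moderate size has $\din(S)<\lambda$, and this is where the actual work lies. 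The paper's Lemma~\ref{lemma:critical_smallSet} does this using a structural ingredient absent from your outline: a.a.s.\ no two $\eps$-in-light vertices are adjacent or share a common in-neighbour (Lemma~\ref{lemma:light_vertex}), which yields $\din(S)\ge|S|\deltain$ for sets of light vertices, while heavy vertices and the induced-arc bound (Lemma~\ref{lema:expansion_UpperRange_small}) cover the remaining cases. Separately, your estimate that $\Bi(s(n-s),p)$ has mean of order $snp$ breaks down when $s$ is close to $n$: for $|V_j|=n-r$ the mean is only about $rnp$, so the in-cut of a near-spanning part is comparable to a single vertex's out-degree and the same narrow-margin problem recurs. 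The paper handles this regime separately (Lemmas~\ref{lemma:critical_bigSet} and~\ref{lemma:out}), using the analogous out-light-vertex structure and an estimate on the number of out-degree-zero vertices, together with a careful accounting of the vertices left uncovered by the subpartition.
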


One interesting feature of our result is that $\uptau(\Dnp)$ has a
very strong relation with the number of vertices with low degrees.
This differs from the graph case in the following sense.
Theorem~\ref{thm:treepack} tells us that, for random graphs, the
obstacles to pack spanning trees are quite simple: either we do not
have enough edges to get more spanning trees or we exhausted the edges
incident with a vertex. Our result shows that for random digraphs the
obstacles to pack arborescences are more intricate while still arising
from natural constraints. This is due to the fact that the root of an
arborescence plays a special role, which does not happen for
undirected graphs. In our case, the reason why $\lambda(\Dnp)$ is an
upper bound for $\uptau(\cald(n,p))$ is that, in order to pack $\ell$
arborescences, every vertex of $\Dnp$ whose in-degree is $\ell-i$ must
be the root of at least $i$ arborescences since its in-degree would be
exhausted. Quite interestingly, our condition does not involve the
out-degrees.

  Similarly to the undirected case, the core of our proof relies on a
  result on combinatorial optimization, which was proved by
  Frank~\cite{AF79} and is an analogue of Theorem~\ref{thm_NW}
  for digraphs. Instead of dealing with partitions, Frank's
  result imposes conditions on subpartitions. A \emph{subpartition} of
  a set $S$ is a collection of pairwise disjoint non-empty subsets of $S$.
  Note that, unlike a partition, a subpartition does not need to
  include every element of~$S$. For every digraph $D=(V,A)$ and
  $S\subseteq V$, let $\din_D(S)$ denote the number of arcs entering
  $S$ (from $V\setminus S$). For future reference, let also
  $\dout_D(S)$ be the number of arcs leaving $S$ (to
    $V\setminus S$) in $D$.

\begin{theorem}[Frank~\cite{AF79}]\label{teo:Frank}
  Let $D = (V,A)$ be a digraph and let $k\geq 0$ be an integer. Then
  $D$ contains $k$ arc-disjoint arborescences if, and only if, for
  every subpartition $\calp$ of~$V$, we have
  \begin{equation}
    \label{eq:subpart}
    \sum_{U\in \calp}\din_D(U) \geq k(|\calp|-1).
  \end{equation}
\end{theorem}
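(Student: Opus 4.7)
The plan is to split into necessity and sufficiency, prove necessity by a direct counting argument, and reduce sufficiency to Edmonds' classical branching theorem, which states that a digraph $D'$ with a specified root $r$ contains $k$ arc-disjoint arborescences rooted at $r$ if and only if $\din_{D'}(U) \geq k$ for every nonempty $U \subseteq V(D')\setminus\{r\}$.

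\textbf{Necessity.} Assume $T_1,\ldots,T_k$ are arc-disjoint arborescences of $D$ with roots $r_1,\ldots,r_k$, and let $\calp = \{U_1,\ldots,U_s\}$ be a subpartition of $V$. Fix $j$. For each $U_i$ not containing $r_j$, the arborescence $T_j$ must contain at least one arc entering $U_i$, since $U_i$ is nonempty and each of its vertices is reachable from $r_j$ along a directed path in $T_j$ that must cross into $U_i$. Because the $U_i$ are pairwise disjoint, $r_j$ lies in at most one of them, so $T_j$ enters at least $s-1$ of the $U_i$. Summing over $j$ and using arc-disjointness gives $\sum_i \din_D(U_i) \geq \sum_j\sum_i \din_{T_j}(U_i) \geq k(s-1)$.

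\textbf{Sufficiency.} Build $D'$ from $D$ by adjoining a new vertex $r$ and $c(v)\in\mathbb{Z}_{\geq 0}$ parallel arcs $(r,v)$ for each $v\in V$, with the multiplicities $c$ to be chosen so that $\sum_{v\in V} c(v) = k$ and $\din_D(U) + c(U) \geq k$ for every nonempty $U \subseteq V$. Edmonds' theorem applied to $D'$ then produces $k$ arc-disjoint arborescences rooted at $r$. Each uses at least one of the $k$ arcs leaving $r$, hence exactly one, and removing $r$ from each yields $k$ arc-disjoint arborescences of $D$ whose roots are the heads of the removed $r$-arcs.

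\textbf{Main obstacle: constructing $c$.} The only nontrivial step is producing such multiplicities $c$; equivalently, an integer vector $c\geq 0$ with $c(U)\geq k-\din_D(U)$ for every nonempty $U\subseteq V$ and $c(V)=k$. Rewritten, the subpartition hypothesis is $\sum_{U\in\calp}\max(0,\,k-\din_D(U)) \leq k$ for every subpartition $\calp$, which is clearly necessary for the existence of such $c$ since $\sum_{U\in\calp} c(U) \leq c(V) = k$ by disjointness. Sufficiency is where I expect the difficulty, and I would exploit the submodularity of $\din_D$ (equivalently, the supermodularity of $U\mapsto k-\din_D(U)$) by uncrossing: if $c$ violates the constraint at two crossing sets $A,B$, then $c(A\cap B)+c(A\cup B) = c(A)+c(B) < 2k-\din_D(A)-\din_D(B) \leq 2k-\din_D(A\cap B)-\din_D(A\cup B)$, so a violation persists at $A\cap B$ or at $A\cup B$. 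Iterating makes the family of violated sets laminar and finally a subpartition, after which the subpartition bound certifies a greedy integer construction of $c$ with $c(V)\leq k$; any shortfall is absorbed by padding arbitrary vertices to bring $c(V)$ up to $k$ exactly.
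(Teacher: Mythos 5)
The paper does not prove this statement: it quotes Frank's theorem with a citation to~\cite{AF79} and uses it as a black box in Section~\ref{sec:main}. So there is no internal proof to compare against; I can only assess your attempt on its own terms.

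Your route is the standard one (Frank's original argument, reproduced e.g.\ in Schrijver's \emph{Combinatorial Optimization}): necessity by counting the crossings an arborescence must make into the parts, and sufficiency via Edmonds' rooted branching theorem after attaching a fresh root $r$ with $k$ parallel arcs whose heads are distributed according to multiplicities $c$. Your necessity argument is complete and correct, and the Edmonds reduction --- including the observation that exactly one $r$-arc lands in each of the $k$ branchings, so deleting $r$ yields $k$ arc-disjoint arborescences of $D$ --- is also correct.

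The genuine gap is where you already expected it, in producing $c$. Your uncrossing computation is right: if $c(A)<k-\din_D(A)$ and $c(B)<k-\din_D(B)$ with $A,B$ crossing, then submodularity of $\din_D$ pushes the violation to $A\cap B$ or $A\cup B$. But the claim that ``iterating makes the family of violated sets laminar and finally a subpartition'' does not follow from this step alone. Uncrossing kills crossing pairs, but it says nothing about nested pairs $A\subsetneq B$ that are both violated; these are perfectly possible and cannot be uncrossed, so a laminar family is not automatically a subpartition and the subpartition hypothesis is not directly applicable. The missing ingredient is to argue on the dual side: in $\max\{\sum_U p(U)y_U : y\geq 0,\ \sum_{U\ni v}y_U\leq 1\}$ with $p(U)=(k-\din_D(U))^+$, an integral solution cannot place weight $1$ on two nested sets (the packing constraint at any vertex of the inner set forbids it), so an integral dual optimum is the indicator vector of a subpartition; total dual integrality of the covering system --- which is where intersecting supermodularity is really spent --- then yields both an integer primal optimum $c$ and the min--max identity $\min c(V)=\max_{\calp}\sum_{U\in\calp}p(U)\leq k$. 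Finally, a small inaccuracy worth fixing: the closing remark about ``padding arbitrary vertices to bring $c(V)$ up to $k$'' is never needed. Since $\din_D(V)=0$ the constraint at $U=V$ already forces $c(V)\geq k$, so once you have any cover with $c(V)\leq k$ you automatically have $c(V)=k$.
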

One of the difficulties of working with subpartitions instead of
partitions is that some vertices may be not included in any part and
the arcs entering such vertices do not contribute to the summation
in~\eqref{eq:subpart}, which is something that did not occur in the
graph case.

In terms of previous results about arborescences in random digraphs, Bal, Bennett, Cooper, Frieze, and
Pra{\l}at~\cite{BBCFP} have proved that in the random digraph
process (where the arcs are added one-by-one), the digraph contains an
arborescence a.a.s. in the step where there is single a vertex with
in-degree zero\footnote{This is part of their main result about
  rainbow arborescences.}. 

\noindent\textbf{Organization of the paper.} This paper is organized
as follows. In Section~\ref{sec:def}, we introduce the main
definitions and notation used in the paper. In Section~\ref{sec:prop},
we present the main properties of $\Dnp$ that are used: in
Section~\ref{ssec:degree} we study properties of the degrees in
$\Dnp$; in Section~\ref{ssec:lambda} we show the relation between
$\lambda(\Dnp)$ and the minimum in-degree; and in
Section~\ref{ssec:expansion} we prove a few basic expansion properties
of $\Dnp$. Finally, in Section~\ref{sec:main} we combine the results
from Section~\ref{sec:prop} with the result by Frank (Theorem~\ref{teo:Frank})
to complete the proof of Theorem~\ref{Th:MainResult}.

\section{Definitions and notation}
\label{sec:def}
In this section, we define the main concepts used in this paper. We
will repeat a few definitions already presented in the introduction so
that the reader can easily find any of them.

\begin{definition} (Random digraph $\Dnp$) Given a function
  $p=p(n)\colon\mathbb{N}\to [0,1]$, let $\Dnp$ denote the random
  digraph with vertex set $[n]=\{1,2,\dotsc, n\}$ such that each of
  the $n(n-1)$ arcs is included independently at random with
  probability $p$.
\end{definition}

\begin{definition} (Neighbourhoods and degrees) Given a digraph
  $D=(V,A)$ and $v\in V$, we define the \emph{in-neighbourhood}
  of~$v$, denoted by $\Gamma_{D}^{in}(v)$, as the set $\{u\in V\colon
  (u,v)\in A\}$. Similarly, we define the \emph{out-neighbourhood}
  of~$v$, denoted by~$\Gamma_{D}^{out}(v)$, as the set $\{u\in V\colon
  (v,u)\in A\}$. Moreover, we define the \emph{in-degree} of $v$ as
  $\din_D(v)=|\Gamma_{D}^{in}(v)|$ and the \emph{out-degree} of $v$ as
  $\dout_D(v)=|\Gamma_{D}^{out}(v)|$. That is, $\din_D(v)$ is the
  number of arcs ``entering''~$v$ and $\dout_D(v)$ is the number of
  arcs ``leaving''~$v$. Let $\deltain(D) = \min_{v\in V}\din_D(v)$ and
  $\deltaout(D) = \min_{v\in V}\dout_D(v)$.

\end{definition}

\begin{definition}(Cuts) Given a digraph $D = (V,A)$ and disjoint sets
  $S, S'\subseteq V$, we define $A_D(S,S')$ as the set of arcs
  $(u,v)\in A$ such that $u\in S$ and $v\in S'$.
\end{definition}

\begin{definition}(Induced digraphs) Given a digraph $D = (V,A)$ and
  $S\subseteq V$, we define $D[S]$ as the digraph with vertex set $S$
  with edge set $A_D[S] = \{(u,v)\in A\colon u\in S,\ v\in S\}$.
\end{definition}

\begin{definition} (Arborescences) An \emph{arborescence} of a digraph
  $D=(V,A)$ is a digraph $T=(V,A_T)$ where $A_T\subseteq A$ such that
  the underlying graph of $T$ is tree and each vertex except the root
  has in-degree $1$ and the root has in-degree zero. Let $\uptau(D)$
  denote the maximum number of arc-disjoint arborescences in $D$.
\end{definition}

\begin{definition}
  Given a digraph $D=(V,A)$, let $\lambda(D)$ denote the maximum
  integer $\lambda\geq 0$ such that, for all $0\leq \ell\leq \lambda$,
\begin{equation}
  \label{eq:lambdadef}
  \sum_{i=0}^{\ell-1} (\ell-i)|\{v\colon\din_{D}(v) = i\}|
  \leq 
  \ell.
\end{equation}
\end{definition}

We use $\din(v)$ to denote $\din_{\Dnp}(v)$ for ease of notation.
Similarly, $\dout(v) = \dout_{\Dnp}(v)$, $\deltain = \deltain(\Dnp)$,
$\deltaout = \deltaout(\Dnp)$, $\tau = \tau(\Dnp)$ and $\lambda =
\lambda(\Dnp)$, and so on.

In all results in this paper, except stated otherwise, the probability
space is the one defined by $\Dnp$ and the asymptotics are for $n$
going to infinity. We use standard asymptotic notation, which may be found in~\cite[Section 1.2]{JLR00}. 

In many proofs, we will use the well known subsubsequence principle, which states that, if $x$ is a constant and $(x_n)$ is a real sequence whose subsequences
always have a subsubsequence converging to $x$, then $x_n \to x$.

\section{Properties of the random digraph $\Dnp$}
\label{sec:prop}

In this section, we study the behaviour of the degrees in $\Dnp$. We
also prove some simple properties about cuts in $\Dnp$. In
Section~\ref{ssec:binom}, we state two basic results on binomial
random variables that are used throughout the paper. For basic probabilistic
 results (such as Markov's and Chebyshev's inequality), we refer the reader to Alon and Spencer~\cite{AlSp03}.

\subsection{Properties of binomial random variables}
\label{ssec:binom}
In this section, we state two results on binomial random variables.

\begin{theorem}[Chernoff's bounds~\cite{JLR00}]\label{teo:chernoff}
  Let $X_1,\ldots,X_n$ denote $n$ independent Bernoulli variables. Let
  $X = \sum^n_{i=1}X_i$ and let $\mu = \bbe[X]$. Then, for any $0 <
  \tau <1$,
  \begin{align}
    \Pr (X\geq (1+\tau)\mu) & \leq e^{-\tau^2\mu/3},\\
    \Pr (X\leq (1-\tau)\mu) & \leq e^{-\tau^2\mu/2}.
  \end{align}
\end{theorem}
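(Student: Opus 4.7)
The plan is to prove both inequalities by the classical exponential-moment (Bernstein trick) argument, which is the standard route to Chernoff-type tail bounds for sums of independent Bernoulli variables. First I would apply Markov's inequality to the random variable $e^{tX}$ for a parameter $t>0$ to be chosen later, obtaining
\begin{equation*}
\Pr(X \geq (1+\tau)\mu) \;=\; \Pr\bigl(e^{tX} \geq e^{t(1+\tau)\mu}\bigr) \;\leq\; e^{-t(1+\tau)\mu}\,\bbe\bigl[e^{tX}\bigr].
\end{equation*}
By independence of the $X_i$'s, the moment generating function factors as $\bbe[e^{tX}] = \prod_{i=1}^n \bbe[e^{tX_i}]$, and for a Bernoulli variable $X_i$ with $\Pr(X_i=1)=p_i$ one has $\bbe[e^{tX_i}] = 1+p_i(e^t-1) \leq \exp\bigl(p_i(e^t-1)\bigr)$, using the elementary inequality $1+x\leq e^x$. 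Multiplying over $i$ and recalling $\mu=\sum_i p_i$, I get $\bbe[e^{tX}]\leq \exp\bigl(\mu(e^t-1)\bigr)$.

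Next I would optimize the resulting bound $\exp\bigl(-t(1+\tau)\mu + \mu(e^t-1)\bigr)$ in $t>0$. Setting the derivative to zero yields $t = \log(1+\tau)$, which substituted back gives the well-known intermediate estimate
\begin{equation*}
\Pr(X \geq (1+\tau)\mu) \;\leq\; \exp\bigl(-\mu \bigl[(1+\tau)\log(1+\tau) - \tau\bigr]\bigr).
\end{equation*}
From here the upper-tail bound reduces to the analytic inequality $(1+\tau)\log(1+\tau) - \tau \geq \tau^2/3$ valid for $0<\tau<1$, which can be checked by a Taylor expansion (or by showing that the difference $f(\tau)=(1+\tau)\log(1+\tau)-\tau-\tau^2/3$ satisfies $f(0)=0$ and $f'(\tau)\geq 0$ on $(0,1)$).

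For the lower tail I would repeat the same Markov trick but with parameter $t<0$, i.e., consider $\Pr(X\leq (1-\tau)\mu) = \Pr(e^{-tX}\geq e^{-t(1-\tau)\mu})$ for $t>0$, obtaining an analogous bound $\exp\bigl(-\mu[(1-\tau)\log(1-\tau)+\tau]\bigr)$ after the optimal choice $t=-\log(1-\tau)$. The remaining analytic step is the inequality $(1-\tau)\log(1-\tau) + \tau \geq \tau^2/2$ on $0<\tau<1$, again verifiable by a Taylor expansion around $\tau=0$, yielding the claimed bound $e^{-\tau^2\mu/2}$.

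The routine steps (Markov, MGF factorization, the inequality $1+x\leq e^x$) are completely standard; the only place that requires any real care is the final step, namely producing the clean constants $1/3$ and $1/2$ from the exact Kullback--Leibler-type exponents. That analytic comparison is the main (and in fact only) technical obstacle, but it is a standard one-variable calculus exercise and is the reason the two bounds come out with different constants in the exponent.
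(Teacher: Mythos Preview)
Your argument is correct and is exactly the standard exponential-moment proof of Chernoff's bounds. Note, however, that the paper does not supply its own proof of this statement: Theorem~\ref{teo:chernoff} is quoted from~\cite{JLR00} as a known tool, so there is nothing in the paper to compare your approach against.
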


\begin{lemma}[Lemma 16~\cite{GPS14-RSA,GPS14}]\label{lemma:lowerTailBinom}
  For every constant $\eta > 0$ there exist positive constants $C_1$
  and $C_2$ such that the following holds for any function $0 \leq p =
  p(n) \leq 1/\sqrt{n}$ and every integer $0 < k \leq (1-\eta)np$. Let
  $X \sim \bin(n,p)$. Then,
  \begin{equation}
    \Pr(X \leq k) =
    C\left(\frac{e^{-pn}}{\sqrt{k}}\right)
    \left(\frac{epn}{k}\right)^k,
    \text{ with $C_1 \leq C \leq C_2$}.
  \end{equation}
\end{lemma}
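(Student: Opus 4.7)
The plan is to derive the formula in two steps. First I would obtain a sharp estimate for the point probability $\Pr(X=k)$ via Stirling's approximation, and then upgrade it to the tail $\Pr(X\le k)$ using the fact that, for $k$ strictly below the mean $np$, the binomial pmf grows geometrically up to $j=k$.

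For the tail-vs-point step I would analyze the ratio
\[
\frac{\Pr(X=j+1)}{\Pr(X=j)} \;=\; \frac{(n-j)p}{(j+1)(1-p)},\qquad 0 \le j \le k-1.
\]
The constraints $j+1 \le k \le (1-\eta)np$ and $p \le n^{-1/2}$ give a lower bound on this ratio of $\bigl(1-(1-\eta)p\bigr)/\bigl((1-\eta)(1-p)\bigr)$, which tends to $1/(1-\eta) > 1$ and therefore exceeds $1/(1-\eta/2)$ for every $n$ large enough in terms of $\eta$. Consequently $\Pr(X=j) \le (1-\eta/2)^{k-j}\,\Pr(X=k)$ for $0 \le j \le k$, and summing the geometric series yields
\[
\Pr(X=k) \;\le\; \Pr(X\le k) \;\le\; \tfrac{2}{\eta}\,\Pr(X=k).
\]
So it suffices to estimate $\Pr(X=k)$ up to a multiplicative constant depending only on $\eta$.

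For the Stirling step I would use $m! = \sqrt{2\pi m}(m/e)^m F(m)$, where $F(m)$ is bounded between two positive absolute constants for every integer $m \ge 1$, to rewrite
\[
\Pr(X=k) \;=\; \Theta(1)\cdot\sqrt{\tfrac{n}{k(n-k)}}\,\bigl(\tfrac{np}{k}\bigr)^{k}\bigl(\tfrac{n(1-p)}{n-k}\bigr)^{n-k}.
\]
Since $k \le (1-\eta)\sqrt{n}$, the square-root prefactor simplifies to $\Theta(1)/\sqrt{k}$. For the remaining factor, Taylor-expanding $\log(1-p)$ and $\log(1-k/n)$ gives
\[
(n-k)\log\tfrac{n(1-p)}{n-k} \;=\; -np + k + R_{n,p,k},
\]
where the bounds $kp \le 1-\eta$, $k^2/n \le (1-\eta)^2$ and $np^2 \le 1$ on the relevant range force $|R_{n,p,k}|$ to stay below some $B = B(\eta)$. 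Hence this factor equals $e^{-np+k}$ times a quantity in $[e^{-B}, e^B]$, and assembling the pieces yields
\[
\Pr(X=k) \;=\; C\cdot\frac{e^{-np}}{\sqrt{k}}\left(\frac{enp}{k}\right)^{k}, \qquad C \in [C_1, C_2],
\]
with $C_1, C_2$ depending only on $\eta$. Combined with the geometric bound, this proves the lemma.

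The main obstacle is the uniform control of the multiplicative constant: the quantities $kp$ and $k^2/n$ appearing in the logarithm expansion are bounded but do not vanish on the allowed range, so they cannot be swept into a $(1+o(1))$ factor and must be carried through as bounded exponential corrections. One must therefore carefully check that the Stirling remainder $F(m)$, the truncation error in the logarithm series, and the geometric-tail summation all combine into constants $C_1, C_2$ that depend only on $\eta$ and are uniform over $n$, $p$, and $k$.
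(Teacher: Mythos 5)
Your proof is correct. Note, however, that this lemma is not proved in the present paper --- it is imported verbatim as Lemma~16 from the cited references --- so there is no in-paper proof to compare against. Your argument is the standard two-step derivation for lower-tail binomial estimates (geometric domination of the tail by the point mass, followed by Stirling for the point mass), which is almost certainly what the original source does, so the approach can be considered essentially the same.

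Two minor remarks. First, in the tail-versus-point step you do not actually need the ``$n$ large enough in terms of $\eta$'' caveat: since $(1-\eta)p < p$, one has
\begin{equation*}
\frac{1-(1-\eta)p}{(1-\eta)(1-p)} \;\geq\; \frac{1-p}{(1-\eta)(1-p)} \;=\; \frac{1}{1-\eta}
\end{equation*}
for \emph{every} $n$ and every $p\in(0,1)$, so the ratio $\Pr(X=j+1)/\Pr(X=j)$ is at least $1/(1-\eta)$ unconditionally and the geometric sum is bounded by $1/\eta$ rather than $2/\eta$. Second, when Taylor-expanding, it is cleaner to use the explicit error bound $|\log(1-x)+x|\leq x^2/(2(1-x))$ for $x\in[0,1)$; applied to $x=p$ and $x=k/n$ this shows that the correction $R_{n,p,k}=kp-k^2/n+O(np^2)+O(k^2/n)$ is bounded in absolute value in terms of $\eta$ alone, exactly as you claim, since $np^2\leq 1$, $kp\leq(1-\eta)np^2\leq 1-\eta$, and $k^2/n\leq(1-\eta)^2np^2\leq(1-\eta)^2$. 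With these remarks the argument is airtight and the uniformity of $C_1,C_2$ in $n$, $p$, and $k$ follows.
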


\subsection{Degrees in $\Dnp$}
\label{ssec:degree}

In this section, we present some results on the minimum in-degree and
out-degree in $\Dnp$. We also prove some properties of vertices with
low degree.

The following lemma is an application of
Lemma~\ref{lemma:lowerTailBinom} to the in-degrees and
out-degrees of $\Dnp$. 
\begin{lemma}\label{lemma:minDegreeSparse} Let $0 < \eta < 1$ be a constant. There exist constants $C_1>0$ and
  $C_2>0$ such that, for any function $\alpha=\alpha(n) \in(0,
  1-\eta]$ and any function $p$ satisfying $0.9 \log n/(n-1) \leq p
  \leq 1/\sqrt{n}$, the following holds\footnote{Here we
    use $d^{in/out}(v)$ to denote either $\din(v)$ or $\dout(v)$.}:
  \begin{enumerate}[label=(\roman*)]
  \item\label{lema:degree} 
    There exists $C=C(n)\in[C_1,C_2]$ such that, for every $v\in [n]$,
    $$\Pr\left(d^{in/out}(v) \leq \alpha
      p(n-1)\right) = \frac{C}{\sqrt{\alpha p(n-1)}} \exp
    \left(-p(n-1)\left(1-\alpha\log\left(\frac{e}{\alpha}\right)\right)
    \right).$$
  \item\label{lema:minDegree1} $\Pr\left(\delta^{in/out} 
      \leq \alpha p(n-1)\right) 
    \leq 
    \displaystyle\frac{C_2}{\sqrt{\alpha p (n-1)}}
    \exp \paren[\Big]{\log n -
      p(n-1)\left(1-\alpha\log\left(e/\alpha \right)\right)}$;
  \item\label{lema:minDegree2} $\Pr\left(\delta^{in/out} > \alpha
      p(n-1)\right) \leq
    \displaystyle{\frac{\sqrt{\alpha p (n-1)}}{C_1}}
    \exp\paren[\Big]{p(n-1)\left(1-\alpha\log\left(e/\alpha\right)\right)
      - \log n}$.
  \item\label{eq:smalldindout} 
    $\Pr\paren[\Big]{\exists v\in[n] \text{ s.t. }\min\{\din(v), \dout(v)\}\leq \alpha p(n-1)}$
      
\hspace{5cm}${\displaystyle\leq {\frac{(C_2)^2}{\alpha p(n-1)}}\exp\left(\log n -2p(n-1)
        \left(1-\alpha\log\left(\frac{e}{\alpha}\right)
        \right)\right)}$.    
  \end{enumerate}

\end{lemma}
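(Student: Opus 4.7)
\medskip\noindent\textbf{Plan.} All four parts reduce to Lemma~\ref{lemma:lowerTailBinom} applied to a single binomial random variable, plus two elementary independence observations. First, since the $n(n-1)$ possible arcs of $\Dnp$ appear independently and the arcs entering distinct vertices form disjoint sets, the random variables $\{\din(v)\}_{v\in[n]}$ are mutually independent, and the same holds for $\{\dout(v)\}_{v\in[n]}$. Second, for a fixed vertex $v$, the arcs counted by $\din(v)$ are disjoint from those counted by $\dout(v)$, so $\din(v)$ and $\dout(v)$ are independent.

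For part~(i), note that $\din(v)$ and $\dout(v)$ are distributed as $\bin(n-1,p)$. I would apply Lemma~\ref{lemma:lowerTailBinom} with $k=\lfloor \alpha p(n-1)\rfloor$; the hypothesis $k\leq(1-\eta)(n-1)p$ follows from $\alpha\leq 1-\eta$, and the assumption $p\leq 1/\sqrt{n}$ is given. Writing $(ep(n-1)/k)^k=\exp(k\log(ep(n-1)/k))$ and substituting $k\approx \alpha p(n-1)$ yields the factor $\exp(\alpha p(n-1)\log(e/\alpha))$, which combines with the $e^{-p(n-1)}$ from Lemma~\ref{lemma:lowerTailBinom} to give $\exp(-p(n-1)(1-\alpha\log(e/\alpha)))$; the rounding errors from the floor are absorbed into the constants $C_1,C_2$. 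Part~(ii) then follows by a direct union bound over the $n$ choices of $v$, which contributes the extra $e^{\log n}$ factor in the exponent.

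For part~(iii), I would use the independence of the $\din(v)$ (or of the $\dout(v)$) to write $\Pr(\deltain>\alpha p(n-1))=(1-q)^n$, where $q$ is the single-vertex probability computed in~(i). Using $1-q\leq e^{-q}$ together with the elementary inequality $xe^{-x}\leq 1/e$ (valid for all $x\geq 0$) yields $(1-q)^n\leq e^{-qn}\leq 1/(qn)$; substituting the expression for $q$ from~(i) gives exactly the desired bound, provided $C_1$ is chosen appropriately in terms of the constant from Lemma~\ref{lemma:lowerTailBinom}. For part~(iv), for each fixed vertex $v$ the independence of $\din(v)$ and $\dout(v)$ gives that the probability both are at most $\alpha p(n-1)$ equals the square of the probability in~(i), which by~(i) is at most $(C_2/\sqrt{\alpha p(n-1)})^2 \exp(-2p(n-1)(1-\alpha\log(e/\alpha)))$; a union bound over the $n$ vertices supplies the $e^{\log n}$ factor. (Note that the stated event is $\min\leq \alpha p(n-1)$, but the announced bound matches ``both are $\leq \alpha p(n-1)$'', so either the event is meant to be $\max$ or the bound actually controls this stronger event.)

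The main obstacle is essentially bookkeeping: being careful with non-integer $\alpha p(n-1)$ (and, when $\alpha p(n-1)<1$, handling $k=0$ by a direct computation of $\Pr(\din(v)=0)=(1-p)^{n-1}$), and propagating the constants $C_1,C_2$ of Lemma~\ref{lemma:lowerTailBinom} consistently through the four items. Conceptually nothing beyond Lemma~\ref{lemma:lowerTailBinom} and the two independence facts above is needed.
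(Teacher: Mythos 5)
Your proposal is correct and follows essentially the same route as the paper's own proof. Parts~(i) and~(ii) are, as you say, Lemma~\ref{lemma:lowerTailBinom} applied to $\bin(n-1,p)$ plus a union bound over the $n$ vertices (the paper phrases~(ii) as Markov's inequality applied to the count $Y$ of low-in-degree vertices, which is the same estimate). The one place your argument diverges is~(iii): the paper observes that $Y$ is binomial because the in-degrees are independent, bounds $\var(Y)\le\bbe Y$, and applies Chebyshev to conclude $\Pr(Y=0)\le 1/\bbe Y$, whereas you use the same independence directly to write $\Pr(\deltain>\alpha p(n-1))=(1-q)^n\le e^{-qn}\le 1/(qn)$ via the elementary inequality $xe^{-x}\le 1/e$. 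Both yield $1/(nq)$, so this is a cosmetic difference in how the independence is exploited rather than a different method. Your parenthetical remark about~(iv) is also correct and worth noting: the bound stated (and the bound both you and the paper actually prove) controls the event that \emph{both} $\din(v)$ and $\dout(v)$ are at most $\alpha p(n-1)$, i.e.\ $\max\{\din(v),\dout(v)\}\le\alpha p(n-1)$, not the $\min$ as written; the paper's one-line proof of~(iv) only argues the ``both'' event, so the $\min$ in the statement is a typo.
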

\begin{proof}
  The proof of (i)--(iii) is basically the same as the proof
  of~\cite[Lemma 18]{GPS14-RSA,GPS14}. We include it here for the sake of
  completeness.

  For every $v\in V$, the in-degree $\din(v)$ of $v$ has distribution
  $\bin(n-1,p)$. Thus, by Lemma~\ref{lemma:lowerTailBinom}, there
  exist $C_1$ and $C_2$ (depending only on $\eta$) and a constant
  $C=C(n) \in [C_1, C_2]$ such that
\begin{align}
  \Pr\left(d^{in}(v) \leq \alpha p(n-1)\right) & = C
  \left(\frac{e^{-p(n-1)}}{\sqrt{\alpha p(n-1)}}\right)
  \left(\frac{e}{\alpha}\right)^{\alpha p(n-1)}\nonumber\\
  & = \frac{C}{\sqrt{\alpha p(n-1)}}
  \exp\left(-p(n-1)\left(1-\alpha\log\left(\frac{e}{\alpha}\right)\right)
  \right).\label{eq:lemmaMinDegree1}
\end{align}
This proves~\ref{lema:degree}. Let $Y$ denote the number of
vertices $v\in [n]$ such that $\din(v)\leq \alpha p(n-1)$. Then, by~\eqref{eq:lemmaMinDegree1}, we conclude that
$$\bbe(Y) \geq 
  {\frac{C_1}{\sqrt{\alpha p(n-1)}}}
  \exp\left(\log n -p(n-1)\left(1-\alpha\log\left(\frac{e}{\alpha}\right)\right)
  \right)$$
and
\begin{equation}
  \label{eq:lemmaMinDegree2}
  \bbe(Y) \leq 
  {\frac{C_2}{\sqrt{\alpha p(n-1)}}}
  \exp\left(\log n -p(n-1)\left(1-\alpha\log\left(\frac{e}{\alpha}\right)\right)
  \right).
\end{equation}
Thus, \ref{lema:minDegree1} follows by
applying Markov's inequality. Since the in-degrees of distinct vertices are
independent random variables, $Y$ is a binomial random variable with
probability $p'$ given by~\eqref{eq:lemmaMinDegree1}. Thus, $\var(Y) =
np'(1-p') \leq \bbe{Y}$ and so, by Chebyshev's inequality, we obtain
\begin{align*}
  \Pr(Y= 0) &
  \leq \frac{\var(Y)}{(\bbe Y)^2} \leq \frac{1}{\bbe{Y}}
   \leq {\frac{\sqrt{\alpha p(n-1)}}{C_1} }
  \exp\left(-\log n +
    p(n-1)\left(1-\alpha\log\left(\frac{e}{\alpha}\right)\right)\right).
\end{align*}
Proving (i), (ii) and (iii) for $d^{out}$ and $\deltaout$ is analogous.

For any $v\in [n]$, \ref{eq:smalldindout} follows trivially
from~\eqref{eq:lemmaMinDegree2} (and its analogue for $\dout(v)$) and
the fact that $\din(v)$ and $\dout(v)$ are independent random
variables.
\end{proof}

  \begin{corollary}\label{lem:mindeg_sparse_o}
    Let $\alpha \in (0,1)$ and let $h(n)$ be a function such that $h(n)=o( \log n)$. Let $p = p(n) =
    (\log(n)+h(n))/(n-1)$. Then there is $C>0$ such that $\deltain < \alpha \log{n}$ with probability at least $1-n^{-C}$.
\end{corollary}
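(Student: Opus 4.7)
The plan is to apply Lemma~\ref{lemma:minDegreeSparse}(iii) after a change of scale. Define $\alpha^{*} = \alpha^{*}(n) = \alpha \log n / p(n-1)$, so that $\alpha^{*} p(n-1) = \alpha \log n$ and the event $\deltain > \alpha \log n$ coincides with $\deltain > \alpha^{*} p(n-1)$. Since $p(n-1) = \log n + h(n)$ and $h(n) = o(\log n)$, we have $\alpha^{*} = \alpha/(1 + h(n)/\log n) \to \alpha$. In particular, fixing $\eta = (1-\alpha)/2 > 0$, for all sufficiently large $n$ we have $\alpha^{*} \in (0, 1-\eta]$, $p(n-1) \geq 0.9\log n$, and $p \leq 1/\sqrt{n}$, so the hypotheses of Lemma~\ref{lemma:minDegreeSparse} are satisfied with these parameters.

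Second, I would bound the exponent provided by Lemma~\ref{lemma:minDegreeSparse}(iii):
\begin{equation*}
  p(n-1)\bigl(1 - \alpha^{*}\log(e/\alpha^{*})\bigr) - \log n
  = -\alpha^{*}\log(e/\alpha^{*})\log n + h(n)\bigl(1 - \alpha^{*}\log(e/\alpha^{*})\bigr),
\end{equation*}
using $p(n-1) = \log n + h(n)$. The function $x \mapsto x\log(e/x)$ is continuous on $(0,1]$ and strictly positive on $(0,1)$, so setting $\gamma = \tfrac12 \alpha\log(e/\alpha) > 0$, we get $\alpha^{*}\log(e/\alpha^{*}) \geq \gamma$ for all large $n$. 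Since $|1 - \alpha^{*}\log(e/\alpha^{*})|$ is bounded and $h(n) = o(\log n)$, the second summand is $o(\log n)$. Hence the whole exponent is at most $-\gamma \log n + o(\log n) \leq -(\gamma/2)\log n$ for $n$ large, and the prefactor $\sqrt{\alpha^{*} p(n-1)}/C_1 = O(\sqrt{\log n})$ is polylogarithmic. This yields $\Pr(\deltain > \alpha \log n) \leq n^{-C}$ for some constant $C > 0$ (e.g.\ any $C < \gamma/2$) and all $n$ sufficiently large; for the finitely many small $n$ one adjusts $C$ down to absorb the constant.

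Finally, since $\deltain$ takes integer values, the event $\deltain \geq \alpha \log n$ differs from $\deltain > \alpha \log n$ only when $\alpha \log n$ happens to be an integer; this is handled by running the same argument with $\alpha$ replaced by $\alpha - 1/\log n$, which affects $\alpha^{*}$ only by an $o(1)$ term and therefore does not change any of the asymptotics. This converts the bound to $\Pr(\deltain < \alpha \log n) \geq 1 - n^{-C}$, as required.

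The main obstacle is purely computational: one must verify that the positive $h(n)$-contribution to the exponent cannot overwhelm the dominant negative term $-\alpha^{*}\log(e/\alpha^{*})\log n$. The assumption $h(n) = o(\log n)$ is precisely what guarantees this separation, and it is also what ensures $\alpha^{*}$ stays in the admissible range $(0, 1-\eta]$ so that Lemma~\ref{lemma:minDegreeSparse}(iii) applies in the first place.
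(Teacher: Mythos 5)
Your argument is correct and follows essentially the same route as the paper: both apply Lemma~\ref{lemma:minDegreeSparse}\ref{lema:minDegree2} and verify that the resulting exponent is $-\Omega(\log n)$ while the prefactor is only polylogarithmic. The only difference is bookkeeping: the paper fixes the constant $\alpha$ and bounds $\Pr(\deltain > \alpha p(n-1))$ (after reducing to $h(n)\geq 0$), whereas you rescale to $\alpha^*(n)=\alpha\log n/(p(n-1))$ so the lemma bounds $\Pr(\deltain>\alpha\log n)$ directly, which if anything is slightly cleaner since it hits the target event on the nose without an auxiliary reduction.
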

\begin{proof}
  It suffices to prove the result for $h(n)\geq 0$. Let
  $\alpha\in(0,1)$ be a constant. Note that $\beta :=
  1-\alpha\log\left(e/\alpha\right)$ is a constant less than~$1$.
  Lemma~\ref{lemma:minDegreeSparse}\ref{lema:minDegree2} leads to
  \begin{equation*}
    \Pr(\deltain > \alpha p(n-1))
    \leq  
    {\displaystyle{\frac{\sqrt{\alpha p (n-1)}}{C_1}}}
    \exp\paren[\Big]{\beta (\log n+h(n))- \log n} = o(n^{(\beta-1)/2}).
  \end{equation*}
\end{proof}
 
 \begin{lemma}\label{lem:mindeg_sparsest_new}
  Let $p = p(n) = (\log(n)+h(n))/(n-1)$ be such that $h(n) \leq C' \log
  \log n$, where $C'$ is a positive constant. Then, for any constant $C>C'$,  $\deltain \leq C$ \aas 
  \end{lemma}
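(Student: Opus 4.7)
The plan is to apply Lemma~\ref{lemma:minDegreeSparse}\ref{lema:minDegree2} with the choice $\alpha = C/(p(n-1))$, which is tailored so that the threshold $\alpha p(n-1)$ equals the constant $C$. First, I would verify the hypotheses of that lemma. Since $h(n) \leq C'\log\log n$ implies $h(n) = o(\log n)$, we have $p(n-1) = \log n + h(n) \to \infty$, so $\alpha = C/(\log n + h(n)) \to 0$ and therefore $\alpha \in (0, 1-\eta]$ for any fixed $\eta$ and all large $n$. In the regime of interest, $p$ also lies in the range $[0.9\log n/(n-1), 1/\sqrt n]$ for large $n$. (Should the statement be interpreted to allow $h(n)$ so negative that $p < 0.9\log n/(n-1)$, it is immediate to reduce to this range by coupling with $\cald(n, p^*)$ for $p^* = 0.9\log n/(n-1)$ and using the fact that $\deltain$ is stochastically non-decreasing in $p$.)

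Plugging $\alpha p(n-1) = C$ and $p(n-1) = \log n + h(n)$ into the exponent of Lemma~\ref{lemma:minDegreeSparse}\ref{lema:minDegree2}, and using $\alpha\log(e/\alpha) = \alpha + \alpha\log(p(n-1)/C)$, I obtain
\[
p(n-1)\bigl(1-\alpha\log(e/\alpha)\bigr) - \log n \;=\; h(n) - C - C\log\!\left(\frac{\log n + h(n)}{C}\right),
\]
so that the lemma yields
\[
\Pr(\deltain > C) \;\leq\; \frac{\sqrt{C}}{C_1}\,e^{-C}\,\exp\!\left(h(n) - C\log\!\left(\frac{\log n + h(n)}{C}\right)\right).
\]

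The final step is to verify that the exponent tends to $-\infty$. Because $h(n) = o(\log n)$, we have $\log\!\left((\log n + h(n))/C\right) = \log\log n + \log(1 + h(n)/\log n) - \log C = \log\log n + O(1)$. Combined with the hypothesis $h(n)\leq C'\log\log n$, the exponent is bounded above by $(C' - C)\log\log n + O(1)$, which tends to $-\infty$ because $C > C'$. Hence $\Pr(\deltain > C) = o(1)$, so $\deltain \leq C$ \aas{}

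There is no real obstacle here: the proof is a direct substitution into the estimate of Lemma~\ref{lemma:minDegreeSparse}. The only point requiring care is the algebraic simplification of the exponent and the observation that the dominant term $-C\log\log n$ beats the $h(n) \leq C'\log\log n$ contribution precisely because the constants are compared by a strict inequality $C>C'$.
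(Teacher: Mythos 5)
Your proof is correct and is in essence the same argument the paper gives, just packaged differently: both are second-moment computations showing that vertices of in-degree at most $C$ exist a.a.s. The paper computes $\mean{Y_C}$ (the expected number of vertices of in-degree exactly $C$) from scratch and then invokes the Chebyshev step ``as in the proof of Lemma~\ref{lemma:minDegreeSparse}\ref{lema:minDegree2}'', whereas you substitute $\alpha = C/(p(n-1))$ directly into the pre-assembled bound of Lemma~\ref{lemma:minDegreeSparse}\ref{lema:minDegree2} and simplify the exponent to $(C'-C)\log\log n + O(1)\to -\infty$. The algebra checks out.

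One small slip worth flagging: the hypothesis $h(n)\le C'\log\log n$ is only an \emph{upper} bound and does not by itself give $h(n)=o(\log n)$; one could have $h(n)=-\tfrac12\log n$, say. You do repair this with the monotone coupling to $p^*=0.9\log n/(n-1)$, after which $h(n)\ge -0.1\log n$ and $\log\bigl(1+h(n)/\log n\bigr)=O(1)$ is all your final estimate needs, so the proof stands. (The paper silently glosses over the same point, and both treatments implicitly take $C$ to be an integer so it can serve as the cutoff $k$ in Lemma~\ref{lemma:lowerTailBinom}; that is harmless since $\deltain$ is integer-valued.)
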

  \begin{proof}
    The expected number of  vertices with in-degree~$C$ in~$\cald(n, p)$ is
    \begin{equation*}
    	\begin{split}
    	n\binom{n-1}{C}p^C(1-p)^{n-C-1}  & \geq \exp\left(\log n +C\log\log n - C\log C -(n-1)p + o(1)\right) \\
    														& \geq \exp\left(-h(n) + C\log\log n- C\log C + o(1)\right)=\omega(1).
    	\end{split}
    \end{equation*}
    As in the proof of Lemma~\ref{lemma:minDegreeSparse}\ref{lema:minDegree2}, we may apply Chebyshev's inequality to conclude that the minimum in-degree is at most~$C$ \aas
  \end{proof}

It turns out that the following real function appears often in our paper:
\begin{equation}\label{def_F}
F(x) = 1- x \log{\frac{e}{x}} =  1-x + x\log x. 
\end{equation}
This is a continuous and strictly decreasing function for $x\in(0,1)$. Moreover,  $F(1) = 0$ and $\lim_{x\to 0^+}F(x) = 1$.  In particular, for every $\phi>1$, there exists a single $\alpha \in (0,1)$ such that $F(\alpha) = 1/\phi$.
  
\begin{corollary}
  \label{lem:mindeg_concentration}
  Let $\phi>1$ be a constant. Let $\alpha \in(0,1)$ be such that
  $F(\alpha)=1-\alpha + \alpha\log \alpha = 1/\phi$. For $p=p(n)\sim \phi \log
  (n)/(n-1)$, we have  $\deltain \sim \alpha p(n-1)$ and $\deltaout \sim
  \alpha p(n-1)$ \aas 
\end{corollary}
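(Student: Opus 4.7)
The plan is to sandwich $\deltain$ (and $\deltaout$) between $(1-\epsilon)\alpha p(n-1)$ and $(1+\epsilon)\alpha p(n-1)$ a.a.s.\ for every small $\epsilon>0$, by applying the two opposite-direction estimates of Lemma~\ref{lemma:minDegreeSparse}. Since $F$ is strictly decreasing on $(0,1)$ with $F(\alpha)=1/\phi$ and $F(1)=0$, we have $\alpha<1$; fix $\epsilon>0$ small enough that $(1+\epsilon)\alpha<1$, so that both $(1-\epsilon)\alpha$ and $(1+\epsilon)\alpha$ lie in a fixed interval $(0,1-\eta]$ for some constant $\eta>0$. Note also that $p=p(n)\sim \phi\log n/(n-1)$ with $\phi>1$ satisfies $0.9\log n/(n-1)\le p\le 1/\sqrt n$ for all sufficiently large $n$, so the hypotheses of Lemma~\ref{lemma:minDegreeSparse} are met with this $\eta$.

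For the lower tail, apply Lemma~\ref{lemma:minDegreeSparse}\ref{lema:minDegree1} with $\alpha'=(1-\epsilon)\alpha$. Strict monotonicity of $F$ gives $F((1-\epsilon)\alpha)>F(\alpha)=1/\phi$, so there exists a constant $\gamma>0$ with $\phi\, F((1-\epsilon)\alpha)\ge 1+2\gamma$. Combined with $p(n-1)=(\phi+o(1))\log n$, this yields
\begin{equation*}
\log n - p(n-1)\, F((1-\epsilon)\alpha) \le -\gamma\log n
\end{equation*}
for all large $n$, so that
\begin{equation*}
\Pr\!\big(\deltain\le(1-\epsilon)\alpha p(n-1)\big)\le \frac{C_2}{\sqrt{(1-\epsilon)\alpha p(n-1)}}\, n^{-\gamma}=o(1).
\end{equation*}
For the upper tail, apply Lemma~\ref{lemma:minDegreeSparse}\ref{lema:minDegree2} with $\alpha'=(1+\epsilon)\alpha$. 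Now $F((1+\epsilon)\alpha)<1/\phi$, so there is a constant $\gamma'>0$ with $\phi\, F((1+\epsilon)\alpha)\le 1-2\gamma'$, and hence
\begin{equation*}
p(n-1)\, F((1+\epsilon)\alpha)-\log n \le -\gamma'\log n
\end{equation*}
for all large $n$, giving $\Pr(\deltain>(1+\epsilon)\alpha p(n-1))=o(1)$. Together, these two bounds show $\deltain/(\alpha p(n-1))\in[1-\epsilon,1+\epsilon]$ a.a.s., and since $\epsilon$ was arbitrary, $\deltain\sim\alpha p(n-1)$ a.a.s. The argument for $\deltaout$ is identical, since Lemma~\ref{lemma:minDegreeSparse} handles in- and out-degrees symmetrically.

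The main technical point is ensuring both perturbed parameters $(1\pm\epsilon)\alpha$ remain in the admissible range $(0,1-\eta]$ required by Lemma~\ref{lemma:minDegreeSparse}; this is handled by exploiting $\alpha<1$ and choosing $\epsilon$ small enough at the outset. Beyond that, the proof is a straightforward exponent comparison using the strict monotonicity of $F$ at the value $\alpha$ where $F(\alpha)=1/\phi$.
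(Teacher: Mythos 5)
Your proposal is correct and follows essentially the same approach as the paper: both sandwich $\deltain$ by applying parts (ii) and (iii) of Lemma~\ref{lemma:minDegreeSparse} to perturbed parameters near $\alpha$ and invoking the strict monotonicity of $F$ to get exponents of the right sign. The only cosmetic difference is that you perturb multiplicatively ($(1\pm\epsilon)\alpha$) while the paper perturbs additively ($\alpha\pm\eps$ with $\eps<\min\{\alpha,1-\alpha\}$); both keep the parameter in the admissible range and yield the same conclusion.
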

\begin{proof}
  Let $\gamma = \gamma(n)$ be such that $\gamma(n)\log n = p(n-1)$.
  Thus, $\gamma\sim\phi$. Let $\eps
  \in(0,\min\{\alpha,1-\alpha\})$ be a constant.

  We have $\lim_{n\to\infty} \gamma(n) F(\alpha+\eps) < F(\alpha)
  \lim_{n\to\infty}\gamma(n) = 1$ and then by
  Lemma~\ref{lemma:minDegreeSparse}\ref{lema:minDegree2}, we obtain
  $\deltain \leq (\alpha+\eps)p(n-1) $ with probability going to~$1$.
  On the other hand, since ${\lim_{n\to\infty}(\gamma(n) F(\alpha-\eps))
    > 1}$, by Lemma~\ref{lemma:minDegreeSparse}\ref{lema:minDegree1},
  we have $\deltain\geq (\alpha-\eps)p(n-1)$ a.a.s. The proof for
  $\deltaout$ is similar.
\end{proof}

\begin{definition}
   We say that a vertex $v\in [n]$ is \emph{$\eps$-in-light}
    if $\din(v) \leq \deltain + \eps np$ and
    \emph{$\eps$-out-light} if $\dout(v) \leq \deltaout + \eps
    np$.
\end{definition}


\begin{lemma}\label{lemma:light_vertex}
  For every constant $\varphi\geq 0.9$, there exists $\eps>0$ such that the following holds for sufficiently large $n$.
  For any $0.9\log n/(n-1) \leq p
  \leq \varphi\log n/(n-1)$, with probability at least $1-n^{-0.18}$,
  there is no pair $(u,v)$ of $\eps$-in-light vertices such that
  $uv\in A$ or $\Gamma^{in}(v)\cap\Gamma^{in}(u)\neq \varnothing$
  and there is no pair $(u,v)$ of $\eps$-out-light vertices
    such that $uv\in A$ or $\Gamma^{out}(v)\cap\Gamma^{out}(u)\neq
    \varnothing$.
\end{lemma}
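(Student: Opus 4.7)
The plan is a first-moment argument. Writing $p = \gamma\log n/(n-1)$, we have $\gamma = \gamma(n) \in [0.9,\varphi]$. I will first introduce a deterministic threshold $T = \alpha^*(\eps,\gamma)\,p(n-1)$ chosen so that $\Pr(\deltain + \eps np > T) = o(n^{-0.18})$; on the complementary event every $\eps$-in-light vertex $v$ satisfies $\din(v) \leq T$. For $\gamma \leq 1$, Corollary~\ref{lem:mindeg_sparse_o} yields $\deltain \leq (\eps/2)\log n$ with probability $1-n^{-C}$, and $\alpha^* = c\eps$ suffices for small $c>0$. For $\gamma > 1$, let $\alpha_\gamma \in (0,1)$ be the unique solution of $F(\alpha_\gamma) = 1/\gamma$; Lemma~\ref{lemma:minDegreeSparse}\ref{lema:minDegree2} applied with $\alpha$ slightly above $\alpha_\gamma$ gives $\deltain \leq (\alpha_\gamma + O(\eps))\,p(n-1)$ with the required probability, so $\alpha^* = \alpha_\gamma + O(\eps)$ works. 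In both regimes $\gamma F(\alpha^*) \to \min(\gamma,1) \geq 0.9$ as $\eps \to 0$, so by continuity of $F$ and of $\gamma \mapsto \alpha_\gamma$ together with compactness of $[0.9,\varphi]$, I can pick a single $\eps>0$ with $\gamma F(\alpha^*) \geq 0.6$ uniformly.

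With $T$ in hand I would next control the two bad expectations. For the arc condition, observe that conditional on $uv \in A$, the random variable $\din(u)$ has distribution $\bin(n-1,p)$, the variable $\din(v)$ has distribution $1 + \bin(n-2,p)$, and these are independent since they involve disjoint sets of arcs. Hence
\begin{equation*}
\Pr[uv \in A,\ \din(u) \leq T,\ \din(v) \leq T] \leq p \cdot \Pr[\bin(n-1,p) \leq T]^2 \cdot (1+o(1)),
\end{equation*}
and summing over the $n(n-1)$ ordered pairs and applying Lemma~\ref{lemma:lowerTailBinom} gives $O(n^{1-2\gamma F(\alpha^*)}) = O(n^{-0.2})$. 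The shared-in-neighbour case is analogous: I sum over triples $(w,u,v)$ of distinct vertices with $wu, wv \in A$ (two independent arcs contributing $p^2$), and under this conditioning $\din(u)$ and $\din(v)$ remain independent with distribution $1 + \bin(n-2,p)$ each; the total is $O(n^3 p^2 \Pr[\bin(n-1,p) \leq T]^2) = O(n\log n \cdot n^{-2\gamma F(\alpha^*)}) = o(n^{-0.18})$. The $\eps$-out-light case follows by applying the same argument to the reversed digraph (whose distribution coincides with that of $\Dnp$), and the four failure probabilities sum to $o(n^{-0.18})$.

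The main obstacle will be handling the near-critical regime $\gamma \approx 1$ uniformly with the rest of the range: for $\gamma < 1$ one has $\deltain = 0$ with positive probability and the threshold $T$ is dictated entirely by the $\eps np$ slack, while for $\gamma > 1$ the threshold is driven by $\alpha_\gamma p(n-1)$ and only fine-tuned by $\eps$. Corollary~\ref{lem:mindeg_sparse_o} and Lemma~\ref{lemma:minDegreeSparse}\ref{lema:minDegree2} cover the two subregimes cleanly, and continuity of $F$ ensures that one $\eps$ suffices across $[0.9,\varphi]$. The remaining work is routine bookkeeping: tracking the $(1+o(1))$ factor arising when comparing $\Pr[\bin(n-2,p) \leq T-1]$ to $\Pr[\bin(n-1,p) \leq T]$ via Lemma~\ref{lemma:lowerTailBinom}, and confirming that $\gamma F(\alpha^*) \geq 0.6$ yields the claimed exponent $-0.18$ after the $n(n-1)$ respectively $n^3$ union bounds.
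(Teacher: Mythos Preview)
Your overall first-moment strategy is correct and is exactly what the paper does: replace the random threshold $\deltain+\eps np$ by a deterministic one, then bound the expected number of bad pairs/triples via Lemma~\ref{lemma:minDegreeSparse}\ref{lema:degree}. The conditioning and independence arguments are fine. However, your choice of $\alpha^*$ does not simultaneously secure both estimates you need: (a) $\Pr(\deltain+\eps np>T)=o(n^{-0.18})$, and (b) $\Pr(\din(v)\le T)\le n^{-0.6}$. Your compactness step only addresses (b), by sending $\eps\to 0$ so that $\gamma F(\alpha^*)\to\min(\gamma,1)\ge 0.9$. But (a) breaks in that limit. For $\gamma\in[0.9,1]$ with $\alpha^*=c\eps$, Lemma~\ref{lemma:minDegreeSparse}\ref{lema:minDegree2} gives
\[
\Pr\!\big(\deltain>(\alpha^*-\eps)p(n-1)\big)\ \le\ n^{\,\gamma F((c-1)\eps)-1+o(1)},
\]
and as $\eps\to 0$ the exponent tends to $\gamma-1\ge -0.1$, which is not below $-0.18$. (Corollary~\ref{lem:mindeg_sparse_o} does not help: its constant $C$ is essentially $(1-F(\eps/2))/2$, which vanishes with $\eps$, and its hypothesis $h(n)=o(\log n)$ excludes e.g.\ $\gamma=0.9$.) For $\gamma>1$ the same issue arises: with $\alpha^*=\alpha_\gamma+O(\eps)$ one has $\gamma F(\alpha^*-\eps)\to\gamma F(\alpha_\gamma)=1$, so (a) degenerates.

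The paper resolves this tension by pinning both quantities at once: choose $\alpha$ (playing the role of your $\alpha^*-\eps$) so that $\gamma F(\alpha)=0.81$, and then $\eps$ so that $\gamma F(\alpha+\eps)=0.71$. This directly gives $\Pr(\deltain>\alpha p(n-1))\le n^{-0.19}$ and $\Pr(\din(v)\le(\alpha+\eps)p(n-1))\le n^{-0.7}$; compactness of $[0.9,\varphi]$ then shows the resulting $\eps(\gamma)$ is bounded below by a positive constant. Your framework accommodates this fix easily---just do not send $\eps\to 0$, and instead target fixed values of $\gamma F$ in $(0.59,0.82)$ for both $\alpha^*-\eps$ and $\alpha^*$. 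There is also no need to split into $\gamma\le 1$ and $\gamma>1$; the equations $\gamma F(\alpha)=0.81$, $\gamma F(\alpha+\eps)=0.71$ have solutions in $(0,1)$ for every $\gamma\ge 0.9$.
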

\begin{proof}
  We claim that it is possible to choose $\alpha$ and $\eps$ so that, for large $n$, we have
  \begin{equation}
    \label{eq:prob_light}
    \Pr(\delta^{in} > \alpha(n-1)p) \leq n^{-0.19}
    ~\text{and}~
    \Pr(\din(v)\leq (\alpha+\eps) p(n-1)) \leq n^{-0.7},\ \forall v\in V.
  \end{equation}
  Assuming that this claim holds, and conditioning on the event that $\delta^{in} \leq \alpha(n-1)p$, let $S$ be the set of vertices $v\in [n]$ such that $\din(v)\leq
  (\alpha+\eps)p(n-1)$. Note that $S$ contains all $\eps$-in-light vertices. Then, for any vertices $u, v\in V$, by our choice
  of $\alpha$ and $\eps$, we have 
  \begin{align*}
    \Pr\left(
      uv \in A, u\in S \text{ and } v\in S
      \right)
      &= p\Pr(v\in S | uv\in A) \Pr(u\in S | uv\in A)
      \\
      &\leq (1+o(1)) p n^{-1.4}.
  \end{align*}
  Since we have at most $n(n-1)$ choices for $(u,v)$ and $p\leq
  \varphi \log n/(n-1)$, the expected number of pairs of adjacent
  vertices in $S$ is $(1+o(1))\varphi n^{-0.4}\log n$. Thus, the
  probability that there are adjacent $\eps$-in-light vertices is at most
  $n^{-0.19}+(1+o(1))\varphi n^{-0.4}\log n \leq \frac14 n^{-0.18}$ for
  sufficiently large $n$.

  For any vertices $u, v, z\in V$, by our choice of $\alpha$ and
  $\eps$, we have
  \begin{align*}
    \Pr\left(
      zu,zv \in A, u\in S \text{ and } v\in S
      \right)
      &= p^2\Pr(v\in S | zv\in A) \Pr(u\in S | zu\in A)
      \\
      &\leq (1+o(1)) p^2 n^{-1.4}.
  \end{align*}
  Hence, since we have at most $n(n-1)(n-2)$ choices for $(u,v,z)$ and
  $p\leq \varphi \log n/(n-1)$, the expected number of pairs of
  adjacent $\eps$-in-light vertices is $(1+o(1))\varphi n^{-0.4}\log^2 n$ and
  the result follows as above. Note that the same argument applies for the out-degree.

  Finally we show how to choose $\alpha$ and $\eps$ to obtain the
  desired bounds for the probabilities in~\eqref{eq:prob_light}. To this end, let
  $F(x) = 1-x+x\log{x}$ be the function defined in~\eqref{def_F}, and let $\beta$ be the constant such that $\beta \sim (n-1)p/\log n$. 
	By hypothesis, $0.9 \leq \beta \leq \varphi$. Choose $\alpha$ so that $\beta F(\alpha) = 0.81$.
	Then the RHS of
  Lemma~\ref{lemma:minDegreeSparse}\ref{lema:minDegree2} is at most
  $O(1) \exp(-0.2\log n+(1/2)\log \log n) \leq n^{-0.19}$.
  We can then
  choose $\eps>0$ so that $\beta F(\alpha+\eps) = 0.71$ and the RHS
  of Lemma~\ref{lemma:minDegreeSparse}\ref{lema:degree} becomes
  $O(1)\exp(-\beta\log nF(\alpha+\eps) - (1/2)\log\log n) \leq \exp(-0.7\log n-(1/2)\log \log n) \leq n^{-0.7}$.
\end{proof}

The next result follows immediately from Chernoff's inequality 
(Theorem~\ref{teo:chernoff}).
\begin{lemma} 
\label{lemma:concentration_degrees}
Let $\psi = \psi(n) = \omega(1)$ and $\phi=1/\sqrt{\log{n}}$. Let $p = p(n)$ be such that
$(n-1)p=\psi\log n$ \aas{}\ for all $v\in [v]$ we have $(1-\phi) p(n-1)\leq
\din(v)\leq (1+\phi)p(n-1)$ and $(1-\phi) p(n-1)\leq \dout(v)\leq
(1+\phi)p(n-1)$.
\end{lemma}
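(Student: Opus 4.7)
My plan is to apply Chernoff's inequality (Theorem~\ref{teo:chernoff}) separately to each in-degree and each out-degree of $\Dnp$, and then union-bound over the $2n$ resulting deviation events. No combinatorial or structural input is needed: the lemma is a purely probabilistic concentration statement about $2n$ binomial random variables whose common mean $\mu = (n-1)p = \psi(n)\log n$ is much larger than $1$.

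Fix $v \in [n]$. Each of the $n-1$ possible arcs into $v$ is present independently with probability $p$, hence $\din(v) \sim \bin(n-1,p)$, with mean $\mu = \psi\log n$; the analogous statement holds for $\dout(v)$. Since $\phi = 1/\sqrt{\log n} \in (0,1)$ for $n$ sufficiently large, both tails of Theorem~\ref{teo:chernoff} apply with $\tau = \phi$, yielding
\begin{equation*}
  \Pr\bigl(\din(v)\notin [(1-\phi)\mu,(1+\phi)\mu]\bigr) \leq 2\exp\bigl(-\phi^2\mu/3\bigr),
\end{equation*}
and the identical estimate for $\dout(v)$. The key arithmetic is that
\begin{equation*}
  \phi^2 \mu = \frac{1}{\log n}\cdot \psi\log n = \psi,
\end{equation*}
so the per-vertex failure probability is $2e^{-\psi/3}$ for each of the in-degree and the out-degree.

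A union bound over the $2n$ deviation events (one per vertex, per orientation) then bounds the probability that some degree falls outside its multiplicative $(1\pm\phi)$-window by at most $4n\exp(-\psi/3)$, which tends to $0$ as $n\to\infty$ in the intended growth regime of $\psi$. There is no substantive obstacle here: the only content is that when $\mu$ dominates $\log n$, the binomial standard deviation $\sqrt{\mu}$ is much smaller than $\phi\mu$, so concentration at the scale $\phi\mu$ holds simultaneously for all $2n$ degrees by the union bound.
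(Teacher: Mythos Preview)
Your approach---Chernoff for each degree, then a union bound over the $2n$ events---is exactly what the paper intends; its entire proof is the one-line remark that the lemma ``follows immediately from Chernoff's inequality,'' and your computation $\phi^2\mu=\psi$ is correct.

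However, your last sentence conceals a real gap. The bound $4n\,e^{-\psi/3}$ does \emph{not} tend to $0$ under the stated hypothesis $\psi=\omega(1)$: that would require $\psi-3\log n\to\infty$. For, say, $\psi=\log\log n$ your union bound diverges, and in fact the lemma as written is false in that regime: a standard lower-tail estimate gives $\Pr\bigl(\din(v)>(1+\phi)\mu\bigr)\geq e^{-(1+o(1))\psi/2}$, so the expected number of violating vertices is $n\,e^{-(1+o(1))\psi/2}\to\infty$, and since the in-degrees are independent, a.a.s.\ some vertex lies outside the window. The defect is in the lemma's choice of $\phi$, not in your method. Replacing $\phi=1/\sqrt{\log n}$ by, e.g., $\phi=2/\sqrt{\psi}$ gives $\phi^2\mu/3=(4/3)\log n$, whence the union bound yields $O(n^{-1/3})$; one still has $\phi=o(1)$, which is all that the paper's applications (Corollary~\ref{corollary:lambda_dense} and Case~(iv) of Lemma~\ref{lemma:TeoPrinc_Ranges}) actually require. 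Your hedge ``in the intended growth regime of $\psi$'' should be made precise along these lines rather than left implicit.
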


For a function $p \colon \mathbb{N}
\to [0,1]$ and integers $n,k \in \mathbb{N}$, we consider the random
variable $Y_k=Y_k(\Dnp)$ that counts the number of vertices of
in-degree $k$ in $\Dnp$.
 
\begin{definition}\label{def:deltastar}
Let $\deltastar = \deltastar(\Dnp)$ denote
  the minimum integer $k\geq 0$ such that $\mean{Y_k}\geq 1$ in
  $\Dnp$.
\end{definition}

The following result shows the relation between $\deltastar$
  and $\deltain$.
  \begin{lemma}\label{lemma:min_deltastar}
    Let $p = p(n) \sim\log(n)/(n-1)$. Then $\deltain
    \in\{\deltastar-1,\deltastar, \deltastar+1\}$ \aas{} Moreover, we have
    that, if $\mean{Y_{\deltastar-1}}=o(1)$, then $\deltain\in\{\deltastar,
    \deltastar+1\}$ and, if $\mean{Y_{\deltastar-1}}=\Omega(1)$, then
    $\deltain\in\{\deltastar-1, \deltastar\}$.
  \end{lemma}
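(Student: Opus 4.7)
The plan rests on two simple facts: (i) the in-degrees $\din(v)$, $v\in[n]$, are mutually independent, since each depends only on the arcs $(u,v)$ with $u\neq v$ and these arc-sets are disjoint across~$v$; in particular each $Y_k$ is $\bin(n,q_k)$ with $q_k = \Pr(\din(v)=k)$, so $\var(Y_k) \leq \mean{Y_k}$. And (ii) in this regime $\deltastar = o(\log n)$; this is the crucial structural fact, as it forces the consecutive ratio
\[
  \frac{\mean{Y_{k+1}}}{\mean{Y_k}} = \frac{(n-k-1)p}{(k+1)(1-p)} \sim \frac{\log n}{k+1}
\]
to blow up at $k=\deltastar$, while its reciprocal vanishes at $k=\deltastar-1$. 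Given these two observations the proof reduces to a couple of applications of Markov and Chebyshev.

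I would first establish $\deltastar = o(\log n)$. On the range $\{0,\dotsc,\lfloor (n-1)p\rfloor - 1\}$ the above ratio exceeds~$1$, so $k\mapsto \mean{Y_k}$ is strictly increasing there. A standard Stirling computation (essentially the one underlying Lemma~\ref{lemma:minDegreeSparse}) gives, for every constant $\alpha\in(0,1)$,
\[
  \mean{Y_{\lfloor \alpha \log n \rfloor}} = n^{\,1 - F(\alpha) + o(1)} \longrightarrow \infty,
\]
since $F$, defined in~\eqref{def_F}, takes values in $(0,1)$ on $(0,1)$. By monotonicity the smallest $k$ with $\mean{Y_k}\geq 1$ satisfies $\deltastar \leq \lfloor \alpha \log n\rfloor$; letting $\alpha\to 0$ gives $\deltastar = o(\log n)$.

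Next I would prove the outer bounds $\deltastar - 1 \leq \deltain \leq \deltastar + 1$ a.a.s. Combining the ratio identity with $\deltastar = o(\log n)$ and $\mean{Y_{\deltastar}}\geq 1$,
\[
  \mean{Y_{\deltastar+1}} = \mean{Y_{\deltastar}}\cdot(1+o(1))\frac{\log n}{\deltastar+1} \longrightarrow \infty,
\]
so Chebyshev applied to the binomial $Y_{\deltastar+1}$ gives $Y_{\deltastar+1}\geq 1$ a.a.s., i.e.\ $\deltain\leq\deltastar+1$. In the opposite direction, the same identity yields
\[
  \mean{Y_{\deltastar-2}} \leq \mean{Y_{\deltastar-1}}\cdot(1+o(1))\frac{\deltastar-1}{\log n} = o(1),
\]
and $\sum_{j\leq \deltastar-2}\mean{Y_j}$ is a geometric tail with ratio $o(1)$, hence also $o(1)$; Markov then gives $Y_j=0$ for every $j\leq\deltastar-2$ a.a.s., i.e.\ $\deltain\geq\deltastar-1$.

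Finally, the ``moreover'' dichotomy is a one-line refinement of the same bookkeeping. If $\mean{Y_{\deltastar-1}} = o(1)$, Markov gives $Y_{\deltastar-1}=0$ a.a.s., so $\deltain\geq\deltastar$, and combined with the upper bound we obtain $\deltain\in\{\deltastar,\deltastar+1\}$ a.a.s. If instead $\mean{Y_{\deltastar-1}}=\Omega(1)$, the ratio identity forces
\[
  \mean{Y_{\deltastar}}=\mean{Y_{\deltastar-1}}\cdot(1+o(1))\frac{\log n}{\deltastar}=\omega(1),
\]
so Chebyshev gives $Y_{\deltastar}\geq 1$ a.a.s., whence $\deltain\leq\deltastar$ and $\deltain\in\{\deltastar-1,\deltastar\}$ a.a.s. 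The only genuinely delicate step is the bound $\deltastar = o(\log n)$, which drives everything else; the rest is routine first- and second-moment bookkeeping on the binomial $Y_k$.
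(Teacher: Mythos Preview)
Your proposal is correct and follows essentially the same approach as the paper: both first argue that $\deltastar = o(\log n)$, then exploit the consecutive ratio $\mean{Y_{k-1}}/\mean{Y_k}\sim k/((n-1)p)$ together with Markov (for the lower bound on $\deltain$) and Chebyshev on the binomial $Y_k$ (for the upper bound). Your write-up is in fact more explicit than the paper's, which leaves the geometric-tail estimate and the $\mean{Y_{\deltastar-1}}=\Omega(1)$ case to the reader.
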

  \begin{proof}
It is straightforward to show that there exists $k^*(n) = o(\log n)$ such that $\mean{Y_{k^*(n)}} = \omega(1)$.  Thus, $\deltastar = o(\log
    n)$. For every~$k =
    o(\log n)$, we have 
    \begin{equation}
      \label{eq:ratio}
      \mean{Y_{k-1}}/\mean{Y_k} \sim
      k/((n-1)p)      
    \end{equation}
    First assume that $\mean{Y_{\deltastar-1}}=o(1)$.   By Markov's inequality and by~\eqref{eq:ratio}, we have
    $\deltain \geq \deltastar$ \aas\ 

    Again by~\eqref{eq:ratio}, we obtain $\mean{Y_{\deltastar+1}} \sim
    np/\deltastar\cdot \mean{Y_{\deltastar}} =\omega(1)$. Hence, by
    Chebychev's inequality (using the independence of the in-degrees),
    we have $\Pr(Y_{\deltastar+1}=0)=o(1)$, so that $\deltain \leq \deltastar+1$ \aas\ The proof for the
    case $\mean{Y_{\deltastar-1}}=\Omega(1)$ is similar.
    
  \end{proof}

\subsection{Estimating $\lambda(\Dnp)$}
\label{ssec:lambda}

In this section, we give tight estimates for $\lambda=\lambda(\Dnp)$ depending on
the range of~$p$. 

The easiest case is $p=\omega(\log{n}/n)$, when our estimate follows immediately from
Lemma~\ref{lemma:concentration_degrees} and the fact that $\lambda$ is
between the minimum and the maximum in-degree.
\begin{corollary}
\label{corollary:lambda_dense}
  If $\phi=\phi(n) = \omega(1)$ is a function and $p=p(n)\sim \phi
  \log (n)/(n-1)$, then $\lambda(n) \sim p(n-1)$ \aas 
\end{corollary}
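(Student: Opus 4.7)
The plan is to sandwich $\lambda(\Dnp)$ between the minimum in-degree $\deltain$ and the maximum in-degree, which we denote $\Delta^{in} := \max_{v}\din(v)$, using the concentration given by Lemma~\ref{lemma:concentration_degrees}. Both extremes are within $(1+o(1))\,p(n-1)$ \aas, so the sandwich delivers the result immediately.

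The deterministic lower bound $\lambda(D) \geq \deltain(D)$ is immediate from~\eqref{eq:lambdadef}: for any integer $\ell$ with $0 \leq \ell \leq \deltain(D)$, every index $i \in \{0,\ldots,\ell-1\}$ satisfies $i < \deltain(D)$, hence $|\{v : \din_D(v) = i\}| = 0$, and the left-hand side of~\eqref{eq:lambdadef} is zero. For the matching deterministic upper bound $\lambda(D) \leq \Delta^{in}(D)$, I would test $\ell = \Delta^{in}(D) + 1$ in~\eqref{eq:lambdadef}: every nonzero term corresponds to some $i \in \{\deltain(D),\ldots,\Delta^{in}(D)\}$ with multiplier $\ell - i \geq 1$, and the counts $|\{v : \din_D(v) = i\}|$ sum to exactly $n$. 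Thus the left-hand side is at least $n$, which strictly exceeds $\ell = \Delta^{in}(D) + 1$ whenever $\Delta^{in}(D) \leq n-2$. The condition in~\eqref{eq:lambdadef} therefore fails for this $\ell$ and $\lambda(D) \leq \Delta^{in}(D)$.

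Under the hypothesis $p \sim \phi\log n/(n-1)$ with $\phi = \omega(1)$, we have $p(n-1) = \omega(\log n)$, so Lemma~\ref{lemma:concentration_degrees} applies with $\psi\sim\phi$ and yields $\deltain,\ \Delta^{in} = (1+o(1))\,p(n-1)$ \aas{} In particular, in the regime $p < 1$ we also have $p(n-1) = o(n)$, so $\Delta^{in} \leq n-2$ \aas, and the side condition for the upper-bound step is met; the boundary case $p = 1$ is deterministic and may be checked directly. The deterministic sandwich $\deltain \leq \lambda \leq \Delta^{in}$ together with the concentration gives $\lambda(\Dnp) \sim p(n-1)$ \aas, as claimed. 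There is no substantive obstacle: once the concentration lemma is invoked, the remainder is a two-line manipulation of the definition of $\lambda$.
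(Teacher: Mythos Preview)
Your approach is exactly the paper's: sandwich $\lambda$ between $\deltain$ and $\Delta^{in}$ and invoke Lemma~\ref{lemma:concentration_degrees}. The deterministic bounds and the concentration step are fine.

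There is one slip in the side condition. You write that ``in the regime $p<1$ we also have $p(n-1)=o(n)$'', but this is false: for instance $p=1/2$ satisfies the hypothesis of the corollary (with $\phi\sim (n-1)/(2\log n)$) and gives $p(n-1)=\Theta(n)$. So your justification of $\Delta^{in}\le n-2$ does not go through as stated. The conclusion you want is still easy to obtain: either observe that testing $\ell=\Delta^{in}+2$ in~\eqref{eq:lambdadef} always gives left side $\ge 2n>\ell$, so $\lambda\le\Delta^{in}+1$ in every digraph and the extra $+1$ is asymptotically irrelevant; or note that for $p<1$ \aas\ not every vertex has in-degree $n-1$ (indeed $\deltain<\Delta^{in}$ \aas\ by the concentration lemma whenever $p$ is bounded away from~$1$, and for $p\to 1$ the expected number of vertices of in-degree $n-1$ is $np^{n-1}=o(n)$), which already forces the left side at $\ell=\Delta^{in}+1$ to exceed $n\ge\ell$. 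With this correction the proof is complete and matches the paper's.
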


Next we consider other ranges of $p$. 
\begin{lemma}\label{lemma:LambdaUpcriticalConst}
  Let $\phi>1$ be a constant. If $p=p(n)\sim \phi \log (n)/(n-1)$, then
  $\lambda \sim \deltain$.
\end{lemma}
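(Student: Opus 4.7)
The lower bound $\lambda \geq \deltain$ is immediate from~\eqref{eq:lambdadef}: for any integer $\ell \leq \deltain$, no vertex has in-degree strictly smaller than $\ell$, so the sum on the left vanishes. So the task is the matching upper bound $\lambda \leq (1+o(1))\deltain$ \aas

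The plan is to fix an arbitrarily small $\eps > 0$, set $\ell := \lceil (1+\eps)\alpha p(n-1)\rceil$, and show that inequality~\eqref{eq:lambdadef} \emph{fails} at this $\ell$ \aas, which by definition forces $\lambda < \ell$. Here $\alpha \in (0,1)$ is the constant from Corollary~\ref{lem:mindeg_concentration} with $F(\alpha) = 1/\phi$; the hypothesis $\phi > 1$, combined with $F$ being continuous and strictly decreasing from $1$ to $0$ on $(0,1)$, is precisely what yields $\alpha < 1$, leaving slack to pick $\eps$ small enough that $(1+\eps)\alpha < 1$. Since Corollary~\ref{lem:mindeg_concentration} also gives $\deltain \sim \alpha p(n-1)$ \aas, the bound $\lambda \leq (1+\eps)\alpha p(n-1)$ \aas\ translates into $\lambda \leq (1 + O(\eps))\deltain$ \aas, and letting $\eps \to 0$ combines with $\lambda \geq \deltain$ to give $\lambda \sim \deltain$ \aas

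To witness failure of~\eqref{eq:lambdadef} at this $\ell$, I would introduce an intermediate threshold $\alpha' := (1+\eps/2)\alpha$ and the random variable $Z := |\{v \in [n] \colon \din(v) \leq \alpha' p(n-1)\}|$. Every vertex counted by $Z$ contributes at least $\ell - \alpha' p(n-1) \geq (\eps/2)\alpha p(n-1)$ to the left-hand side of~\eqref{eq:lambdadef}, so provided $Z \to \infty$ \aas\ the sum will exceed $\ell = (1+\eps)\alpha p(n-1) + O(1)$ for $n$ large. Lemma~\ref{lemma:minDegreeSparse}\ref{lema:degree} applied with parameter $\alpha'$ (valid, since $\alpha' < 1$ and $p$ lies in the required range) gives $\expect{Z} = \Theta(n^{1 - \phi F(\alpha')}/\sqrt{\log n})$, and the strict inequality $\phi F(\alpha') < \phi F(\alpha) = 1$ coming from strict monotonicity of $F$ is the crucial point: it forces $\expect{Z}$ to grow polynomially in $n$. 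Since the in-degrees of distinct vertices in $\Dnp$ are independent, $Z$ is binomial with $\var(Z) \leq \expect{Z}$, so Chebyshev's inequality yields $Z \sim \expect{Z} \to \infty$ \aas

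The only real subtlety, and the one I expect to require the most care, lies in how the slack $\alpha < 1$ afforded by $\phi > 1$ is used twice: first, in keeping $(1+\eps)\alpha$ itself strictly below $1$ so that Lemma~\ref{lemma:minDegreeSparse} remains applicable at both thresholds $\alpha'$ and $(1+\eps)\alpha$; and second, in guaranteeing the strict inequality $\phi F(\alpha') < 1$ that makes $\expect{Z}$ grow polynomially rather than remain bounded. Once those choices are set, the per-vertex contribution bound, the binomial variance estimate, and the final rearrangement are routine.
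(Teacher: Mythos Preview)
Your proposal is correct and follows essentially the same approach as the paper: both fix $\alpha$ via $F(\alpha)=1/\phi$, use Lemma~\ref{lemma:minDegreeSparse}\ref{lema:degree} to show that polynomially many vertices have in-degree below $(1+\eps)\alpha p(n-1)$, and conclude that~\eqref{eq:lambdadef} fails there, forcing $\lambda\leq(1+\eps)\deltain$ \aas\ Your two-threshold presentation (counting at $\alpha'=(1+\eps/2)\alpha$ and using a per-vertex contribution of $(\eps/2)\alpha p(n-1)$) and your use of Chebyshev rather than Chernoff are minor cosmetic variations; the paper instead counts at the single threshold $(1+\eps)\alpha$ and relies on the count $\Theta(n^{1-\beta})$ being $\omega(\ell)$ directly.
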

\begin{proof}
  By Corollary~\ref{lem:mindeg_concentration}, for $\alpha\in(0,1)$
  such that $F(\alpha) = 1/\phi$, we have
  $\deltain\sim \alpha p(n-1)$ a.a.s. Given $\eps > 0$, Lemma~\ref{lemma:minDegreeSparse}\ref{lema:degree} ensures that there is $0 <\beta < 1$ (depending on $\alpha$ and $\eps$) such that, for any
  vertex~$v$, we have  \begin{equation*}
    \Pr(\din(v)\leq\alpha{(1+\eps)}p(n-1))
    = \Theta\left(
       n^{-\beta}
      \right).
    \end{equation*}
    Since the in-degrees of distinct vertices are independent, we may
    apply Chernoff's inequality (Theorem~\ref{teo:chernoff}) to the
    binomial random variable counting the number of vertices whose
    in-degree is at most $\alpha{(1+\eps)}p(n-1)$ to conclude that there
    are $\Theta(n^{1-\beta})$
    such vertices \aas \
    Since $n^{1-\beta}  =\omega(\alpha{(1+\eps)}p(n-1))$, this
      implies that $\lambda \leq \alpha
{(1+\eps)}p(n-1) \sim
    (1+\eps)\deltain$ \aas{} Since $\lambda \geq \deltain$ holds
    trivially, our result follows.
\end{proof}

In the next cases, we use the following simple fact.
\begin{claim}\label{claim_aux}
Let $D$ be a digraph and $k \geq 0$ an integer. If $Y_k > k + 1$, then $\lambda(D) \leq k$.
\end{claim}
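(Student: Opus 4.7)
The plan is to exploit the definition of $\lambda(D)$ directly: showing $\lambda(D) \leq k$ is equivalent to exhibiting some $\ell \in \{0,1,\ldots,k+1\}$ for which the defining inequality
$$\sum_{i=0}^{\ell-1}(\ell-i)Y_i(D) \leq \ell$$
is violated. Indeed, if no such $\ell$ existed, then every $\ell \leq k+1$ would satisfy the inequality and we would have $\lambda(D) \geq k+1$ by the maximality in the definition.

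The natural choice is $\ell = k+1$, because this is precisely the value that picks up the term involving $Y_k$. First, I would write
$$\sum_{i=0}^{k}(k+1-i)Y_i(D) \geq (k+1-k)\,Y_k(D) = Y_k(D),$$
using only that the other summands $(k+1-i)Y_i(D)$ for $i < k$ are nonnegative. By hypothesis $Y_k(D) > k+1$, so the sum strictly exceeds $\ell = k+1$, and the defining inequality fails at $\ell = k+1$. Therefore $\lambda(D) \leq k$.

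There is essentially no obstacle here: the claim is a one-line consequence of unpacking the definition of $\lambda$ and isolating the single summand corresponding to $i = k$. The only care needed is to observe that $\ell = k+1$ is the correct test value (larger $\ell$ would give a smaller coefficient on $Y_k$, and smaller $\ell$ would omit it entirely), and that dropping the remaining nonnegative terms is sufficient because $Y_k(D)$ already exceeds the right-hand side $\ell = k+1$.
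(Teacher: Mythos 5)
Your proof is correct and is exactly the argument the authors intend: the paper presents Claim~\ref{claim_aux} as a ``simple fact'' and supplies no proof, so the one-line unpacking of the definition of $\lambda$ at the test value $\ell=k+1$, isolating the summand $(k+1-k)Y_k = Y_k > k+1$, is precisely what is being taken for granted. No issues.
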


\begin{lemma}
\label{lemma:lambda_sparsest} Let $p = p(n) = (\log(n)+h(n))/(n-1)$ be
such that $h(n) = O(\log\log n)$. Then $\lambda(\Dnp) \in \{\deltain,
\deltain+1\}$ a.a.s.
\end{lemma}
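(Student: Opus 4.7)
The lower bound $\lambda \geq \deltain$ is immediate from the definition: for every $\ell \leq \deltain$ we have $Y_i = 0$ for all $i < \deltain$, so the sum in~\eqref{eq:lambdadef} vanishes and the condition holds trivially.

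For the upper bound $\lambda \leq \deltain + 1$ a.a.s., the plan is to apply Claim~\ref{claim_aux} with $k = \deltain + 1$, so it suffices to show that $Y_{\deltain+1} \geq \deltain + 3$ a.a.s. By Lemma~\ref{lem:mindeg_sparsest_new}, there is an integer constant $M$, depending on the hidden constant in $h(n) = O(\log\log n)$, such that $\deltain \leq M$ a.a.s. A union bound over $k \in \{0, 1, \ldots, M\}$ then reduces the task to proving, for each fixed such $k$, that
\[
\Pr\!\left(\deltain = k,\ Y_{k+1} \leq k + 2\right) = o(1).
\]

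Fix such a $k$. Using the exact formula $\mean{Y_j} = n\binom{n-1}{j}p^j(1-p)^{n-1-j}$ together with $p = (\log n + h(n))/(n-1)$, the ratio between consecutive expectations satisfies $\mean{Y_{k+1}}/\mean{Y_k} \sim \log(n)/(k+1) \to \infty$. This produces a clean dichotomy. If $\mean{Y_k} \leq 1/\sqrt{\log n}$, then Markov's inequality gives $\Pr(\deltain = k) \leq \Pr(Y_k \geq 1) \leq \mean{Y_k} = o(1)$. Otherwise $\mean{Y_k} > 1/\sqrt{\log n}$, whence $\mean{Y_{k+1}} = \omega(1)$; since the in-degrees of distinct vertices are independent, $Y_{k+1}$ is a binomial random variable with $\var(Y_{k+1}) \leq \mean{Y_{k+1}}$, and Chebyshev's inequality yields $\Pr(Y_{k+1} \leq k+2) \leq 4/\mean{Y_{k+1}} = o(1)$ (valid for large $n$, once $\mean{Y_{k+1}} \geq 2(k+2)$). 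Either way the joint probability is $o(1)$, and summing over the $M+1$ values of $k$ completes the argument.

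The main subtlety is handling values of $k$ for which $\mean{Y_k}$ is neither clearly $o(1)$ nor clearly $\omega(1)$. The multiplicative gap of order $\log n$ between $\mean{Y_k}$ and $\mean{Y_{k+1}}$ resolves this: any non-negligible $\mean{Y_k}$ automatically forces $\mean{Y_{k+1}}$ to diverge, so we cannot be simultaneously stuck in a borderline regime at both $k$ and $k+1$. The subsubsequence principle could also be used to formalise this dichotomy cleanly, as is done elsewhere in the paper.
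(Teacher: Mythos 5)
Your proof is correct. The lower bound $\lambda\geq\deltain$ is immediate from the definition, and the upper bound follows from Claim~\ref{claim_aux} with $k=\deltain+1$: by Lemma~\ref{lem:mindeg_sparsest_new} one can union-bound over the constantly many possible values $k$ of $\deltain$, and for each fixed $k$ the ratio $\mean{Y_{k+1}}/\mean{Y_k}\sim \log n/(k+1)$ yields the clean dichotomy you describe (either $\mean{Y_k}$ is already $o(1)$, so $\Pr(\deltain=k)=o(1)$ by Markov, or $\mean{Y_{k+1}}=\omega(1)$, so Chebyshev gives $\Pr(Y_{k+1}\leq k+2)=o(1)$). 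In either case the bound is $O(1/\sqrt{\log n})$ uniformly in $n$, and summing over the finitely many $k$ finishes the argument.

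Your route differs from the paper's in a useful way. The paper introduces the deterministic threshold $\deltastar$ (Definition~\ref{def:deltastar}), invokes Lemma~\ref{lemma:min_deltastar} to pin $\deltain$ to $\{\deltastar,\deltastar+1\}$ (resp.\ $\{\deltastar-1,\deltastar\}$), and proves $\lambda\leq\deltastar+1$ (resp.\ $\lambda\leq\deltastar$) by splitting into the cases $\mean{Y_{\deltastar-1}}=o(1)$ versus $\Omega(1)$; since that split is a property of the whole sequence rather than of a single $n$, the paper closes with the subsubsequence principle. You sidestep $\deltastar$, Lemma~\ref{lemma:min_deltastar}, and any subsequence argument entirely: the union bound over constant $k$ together with a per-$n$ dichotomy produces a single $o(1)$ estimate directly, so the subsubsequence principle you mention at the end is in fact unnecessary in your version. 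You use Chebyshev where the paper uses Chernoff, giving a weaker error bound ($O(1/\sqrt{\log n})$ versus $n^{-A}$), which is still sufficient for an a.a.s.\ statement. The paper's framing via $\deltastar$ is somewhat more informative and is reused in the proof of Lemma~\ref{lem:lambda_critical}; your version is more self-contained and a bit more elementary.
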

\begin{proof}
Consider $\deltastar$ from Definition~\ref{def:deltastar}. First assume that
  $\mean{Y_{\deltastar-1}} = o(1)$. We shall prove that $\lambda \leq \deltastar+1$ \aas{}, which leads to the desired conclusion because
  $\lambda \geq \deltain$ and Lemma~\ref{lemma:min_deltastar} ensures that $\deltain \in \{\deltastar, \deltastar+1\}$ \aas{}.
The definition of
    $\deltastar$ implies that $\mean{Y_\deltastar} \geq 1$, and
    Lemma~\ref{lem:mindeg_sparsest_new} implies that there exists~$C>0$ such
    that~$\deltain \leq C$ \aas, and hence
    $\deltastar = O(1)$. Moreover, $\mean{Y_{\deltastar+1}} \sim np/\deltastar\cdot \mean{Y_{\deltastar}}$, so that $\mean{Y_{\deltastar+1}} \geq np/\deltastar(1+o(1)) =
    \omega(1)$. By Chernoff's inequality (Theorem~\ref{teo:chernoff}),
    $\Pr(Y_{\deltastar+1}\leq \mean{Y_{\deltastar+1}}/2) \leq \exp(-A\log n))$, where
    $A >0$. Thus, $Y_{\deltastar+1}> \deltastar+2$ \aas{} and so $\lambda\leq
    \deltastar+1$ \aas{} by Claim~\ref{claim_aux}.

The proof is similar for the case $\mean{Y_{\deltastar-1}} =
\Omega(1)$, where we prove that $\lambda\leq \deltastar$ \aas{} The result then follows by the subsubsequence principle.
\end{proof}

\begin{lemma} 
\label{lem:lambda_critical}
Let $p = p(n) = (\log(n)+h(n))/(n-1)$ be such that $h(n)
  = o(\log n)$ and $h(n) = \omega(\log \log n)$. Then $\lambda(\Dnp)
  \sim \deltain$ a.a.s.
\end{lemma}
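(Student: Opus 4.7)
The bound $\lambda\ge\deltain$ is immediate, so it suffices to prove $\lambda\le(1+o(1))\deltain$ a.a.s. By Claim~\ref{claim_aux}, this reduces to exhibiting an integer $k=(1+o(1))\deltain$ such that $Y_k>k+1$ a.a.s., where $Y_k$ counts vertices of $\Dnp$ of in-degree exactly $k$.

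The crucial preliminary is to show that $\deltastar=\omega(1)$ and $\deltastar=o(\log n)$. For $k=o(\log n)$ the Poisson-type asymptotic $\mean{Y_k}\sim e^{-h(n)}(\log n+h(n))^k/k!$, combined with Stirling, turns $\mean{Y_\deltastar}\asymp 1$ into the implicit relation $\deltastar\log(e\log n/\deltastar)\sim h(n)$. A bounded $\deltastar$ would make the left-hand side $O(\log\log n)$, contradicting $h(n)=\omega(\log\log n)$; a $\deltastar=\Omega(\log n)$ would make it $\Theta(\log n)$, contradicting $h(n)=o(\log n)$. Hence $\deltastar\to\infty$ and $\log(\log n/\deltastar)\to\infty$, and Lemma~\ref{lemma:min_deltastar} then gives $\deltain\sim\deltastar$ a.a.s. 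It therefore remains to prove $\lambda\le(1+o(1))\deltastar$ a.a.s.

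I would then set $j:=\lceil\log\deltastar\rceil$ and $k:=\deltastar+j$. Because $\deltastar\to\infty$, we have $j=o(\deltastar)$, so $k\sim\deltastar$. Telescoping the ratio $\mean{Y_{\ell+1}}/\mean{Y_\ell}=(1+o(1))(n-1)p/(\ell+1)\sim\log n/(\ell+1)$ for $\ell=\deltastar,\dots,\deltastar+j-1$ (the multiplicative error $(1+O(\log n/n))^j$ being negligible since $j=O(\log\log n)$), and using $\deltastar+i\le\deltastar+j\le 2\deltastar$, gives
\[
\mean{Y_k}\;\ge\;(1-o(1))\,\mean{Y_\deltastar}\left(\tfrac{\log n}{2\deltastar}\right)^{\!j}\;\ge\;(1-o(1))\,\deltastar^{\log(\log n/\deltastar)-\log 2},
\]
which is $\omega(\deltastar)$, hence $\omega(k)$, since $\log(\log n/\deltastar)\to\infty$ and $\deltastar\to\infty$. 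Because the in-degrees of distinct vertices are independent, $Y_k$ is a binomial random variable, and Chernoff's inequality (Theorem~\ref{teo:chernoff}) yields $Y_k\ge\mean{Y_k}/2>k+1$ a.a.s. Claim~\ref{claim_aux} then gives $\lambda\le k=(1+o(1))\deltastar\sim\deltain$ a.a.s., completing the proof.

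The main obstacle is choosing the window of in-degrees correctly: the increment $j$ must simultaneously satisfy $j=o(\deltastar)$ (so that $k\sim\deltain$) and be large enough that $\mean{Y_k}$ overshoots $k$. The hypothesis $h(n)=\omega(\log\log n)$ is used precisely to force $\deltastar\to\infty$, which is what creates the room needed; in the complementary regime $h(n)=O(\log\log n)$ handled by Lemma~\ref{lemma:lambda_sparsest}, $\deltastar$ is bounded and only the weaker conclusion $\lambda\in\{\deltain,\deltain+1\}$ is attainable.
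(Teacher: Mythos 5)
Your proof is correct and follows the same overall strategy as the paper's: both invoke Claim~\ref{claim_aux} and exhibit a deterministic threshold $k\sim\deltastar\sim\deltain$ with $Y_k>k+1$ a.a.s., by comparing $\mean{Y_k}$ to $\mean{Y_{\deltastar}}\ge 1$ via the ratio $\mean{Y_{\ell+1}}/\mean{Y_\ell}\sim (n-1)p/(\ell+1)$ and then applying Chernoff to the binomial $Y_k$. The execution differs in two ways worth noting. First, the paper fixes a constant $\eps>0$, sets $T=\lfloor(1+\eps)\deltain\rfloor$, and obtains $\lambda\le(1+\eps)\deltain$ a.a.s.\ for each fixed $\eps$; the $\sim$ conclusion is then recovered by letting $\eps\to 0$ (implicitly through the subsubsequence principle), and the computation of $\mean{Y_T}$ with random $T$ and $k=\deltain$ is a bit loose notationally. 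Your choice $k=\deltastar+\lceil\log\deltastar\rceil$ is a deterministic, $o(\deltastar)$-sized window, which gives $\lambda\le(1+o(1))\deltain$ in one step and avoids both of those minor awkwardnesses. Second, you derive $\deltastar\to\infty$ and $\log(\log n/\deltastar)\to\infty$ directly from the Stirling estimate $\deltastar\log(e\log n/\deltastar)\sim h(n)$, which makes transparent where the hypothesis $h(n)=\omega(\log\log n)$ is actually used (the paper reaches $\deltain=o(\log n)$ through Corollary~\ref{lem:mindeg_sparse_o} and cites Lemma~\ref{lemma:minDegreeSparse}\ref{lema:minDegree1} for $T>\deltain$, but does not isolate $\deltastar\to\infty$ as an explicit step). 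Your closing remark correctly identifies the structural dichotomy with Lemma~\ref{lemma:lambda_sparsest}. One small imprecision: $\mean{Y_{\deltastar}}$ need not be $\Theta(1)$ (it can be as large as $\Theta((n-1)p/\deltastar)$), so ``$\mean{Y_{\deltastar}}\asymp 1$'' is not quite right; but since you only use the one-sided bound $\mean{Y_{\deltastar}}\ge 1$ and the sandwich $\mean{Y_{\deltastar-1}}<1\le\mean{Y_{\deltastar}}$ still pins down $\deltastar\log(e\log n/\deltastar)\sim h(n)$ up to $O(\log\log n)$ error (negligible since $h(n)=\omega(\log\log n)$), the argument goes through.
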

\begin{proof}
The proof is very similar to the proof for Lemma~\ref{lemma:lambda_sparsest}. Let $\eps>0$ be a constant and fix $T =
  \lfloor(1+\eps)\deltain\rfloor$. By Lemma~\ref{lemma:minDegreeSparse}\ref{lema:minDegree1}, we have $T>\deltain$ \aas{} We will address only
  the case where $\mean{Y_{\deltastar-1}} = o(1)$. By
  Lemma~\ref{lemma:min_deltastar}, we have $\deltain \in \{\deltastar, \deltastar+1\}$ a.a.s. To show that $\lambda \leq T$, we prove that $Y_{T} > T+1$ a.a.s. By
  Corollary~\ref{lem:mindeg_sparse_o}, we have that $\deltain = o(\log n)$ and so $T = o(\log
  n)$ as well. Then, for $k=\deltain$, we have
  \begin{equation}
    \frac{\mean{Y_T}}{\mean{Y_k}}  
    \sim 
    \left(\frac{np}{k}\right)^{T-k}
    =
   \omega(T).
  \end{equation}
  Since $\mean{Y_k} =\Omega(1)$, we have that $Y_T = \omega(T)$ a.a.s.\
  by Chernoff's inequality (Theorem~\ref{teo:chernoff}),
  which implies that $\lambda\leq T$ a.a.s. 
\end{proof}

\subsection{Expansion properties}

In this section, we  investigate properties of the cuts of
$\Dnp$. We start by proving a simple result about the number of arcs
going from a ``large'' set to another ``large'' set of vertices.

\label{ssec:expansion}
\begin{lemma}\label{lema:expansion_UpperRange_big}
  Let $f(n)\to\infty$
 and let $\zeta$ be a positive constant.
  There exists a positive constant $C$ such that, for $p = p(n) \in [f(n)/n, 1]$ and large $n$, the probability that there exist
  disjoint sets $S,S'\subseteq [n]$ with size at least $\zeta n$ 
  such that $|A(S',S)| < \zeta^2 n^2 p/2$ is at most $n^{-C}$.
\end{lemma}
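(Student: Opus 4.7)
The plan is a standard ``Chernoff plus union bound'' argument, essentially identical to the way expansion estimates are proved for $\calg(n,p)$.

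First I would fix a pair of disjoint sets $S, S' \subseteq [n]$ with $|S|, |S'| \geq \zeta n$. Because $S \cap S' = \varnothing$, the random variable $|A(S',S)|$ is a sum of $|S|\cdot|S'|$ independent $\mathrm{Bernoulli}(p)$ indicators (one for each ordered pair in $S' \times S$), so it has distribution $\mathrm{Bin}(|S||S'|, p)$ and mean $\mu = |S||S'|\,p \geq \zeta^2 n^2 p$. The event $\{|A(S',S)| < \zeta^2 n^2 p / 2\}$ is contained in $\{|A(S',S)| \leq \mu/2\} = \{|A(S',S)| \leq (1-1/2)\mu\}$, and so by the lower-tail Chernoff bound in Theorem~\ref{teo:chernoff} (with $\tau = 1/2$),
\begin{equation*}
\Pr\!\left(|A(S',S)| < \zeta^2 n^2 p / 2\right) \;\leq\; \exp(-\mu/8) \;\leq\; \exp(-\zeta^2 n^2 p / 8).
\end{equation*}

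Next I would take a union bound. Since each vertex of $[n]$ lies in $S$, in $S'$, or in neither, the number of ordered pairs of disjoint subsets of $[n]$ is at most $3^n$. Hence the probability that some pair $(S, S')$ of disjoint sets of size at least $\zeta n$ violates the desired inequality is at most
\begin{equation*}
3^n \cdot \exp\!\left(-\zeta^2 n^2 p / 8\right) \;\leq\; \exp\!\left(n \log 3 - \zeta^2 n\, f(n)/8\right),
\end{equation*}
where I used the hypothesis $p \geq f(n)/n$, so that $n^2 p \geq n f(n)$. Since $f(n) \to \infty$, for all sufficiently large $n$ the exponent is at most $-\zeta^2 n f(n)/16$, which decays faster than any polynomial in $n$; in particular, it is bounded above by $n^{-C}$ for every constant $C > 0$.

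There is essentially no obstacle here: the mean $|S||S'|p$ is quadratic in $n$ times $p$, whereas the number of pairs is only singly exponential, and the assumption $p \geq f(n)/n$ with $f(n) \to \infty$ is precisely what is needed so that the exponential Chernoff saving beats the enumeration cost. The only small point to be careful about is noting that $|A(S',S)|$ really is a $\mathrm{Bin}(|S||S'|,p)$ random variable, which requires the disjointness of $S$ and $S'$ so that the $|S||S'|$ ordered pairs correspond to distinct (and hence independent) potential arcs of $\Dnp$.
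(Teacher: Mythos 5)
Your proof is correct and follows essentially the same route as the paper: identify $|A(S',S)|$ as a $\bin(|S||S'|,p)$ random variable with mean at least $\zeta^2 n^2 p$, apply the lower-tail Chernoff bound of Theorem~\ref{teo:chernoff} with $\tau = 1/2$, and then union-bound over all pairs of disjoint large sets, using $p \geq f(n)/n$ with $f(n)\to\infty$ to beat the exponential enumeration cost. The only cosmetic difference is the count of pairs ($3^n$ in your argument versus $\sum_{s,s'}\binom{n}{s}\binom{n}{s'} \leq 4^n$ in the paper), which does not affect the conclusion.
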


\begin{proof}
  Let  $S,S'\subseteq [n]$ be disjoint sets with size at
  least $\zeta n$. Then $|A(S,S')|$ has distribution
  $\bin(|S'||S|,p)$. Thus, $\bbe(| A(S',S)|) =|S||S'|p$ and by
  Chernoff's inequality (Theorem~\ref{teo:chernoff}), we have
\begin{equation}
  \Pr\left(| A(S',S)| \leq \frac{\zeta^2 n^2 p}{2}\right)
  \leq \exp\left(-\frac{\zeta^2 n^2 p}{8}\right).\label{eq:VA_arc}
\end{equation} 
By the union bound, the probability that there exist disjoint sets
$S,S'\subseteq [n]$ with size at least $\zeta n$ such that $|A(S',S)|
< \zeta^2 n^2 p/2$ is at most
\begin{align*}
  \sum_{s,s' \geq \zeta n} \binom{n}{s}\binom{n}{s'} \exp\left(-
    \frac{\zeta^2 n^2p}{4}\right) &\leq
  4^n  \exp\left(-\frac{\zeta^2 n^2p}{4}\right) 
  < \exp\left(2n- \frac{\zeta^2 n^2p}{4}\right),
\end{align*}
and the result follows since $np\geq f(n)\to\infty$.
\end{proof}

{Next we prove a lemma about the number of induced arcs in
  sets that are ``not too large''. Later we will use this lemma to argue that
  many arcs must leave such sets.}
\begin{lemma}\label{lema:expansion_UpperRange_small}
  Consider a function $f= f(n)\to\infty$, and let $\phi$ be positive constant. There
  exists a positive constant $\zeta$ such that, for $p = p(n) \in [f/n, 1]$ and large $n$, the probability that there
  exists $S \subseteq [n]$ with size $|S| \leq \zeta n$ such that
  $|A[S]| > \phi n |S|p$ is at most $e^{-f^2/2}$.
\end{lemma}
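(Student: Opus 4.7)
The plan is to combine the Chernoff upper-tail bound for the binomial with a union bound over sets~$S$, after first disposing of a trivial range of~$p$. Throughout, I fix a sufficiently small constant $\zeta\in(0,\phi/e^2)$ depending only on~$\phi$.

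If $p\geq \zeta/\phi$, then deterministically every $S\subseteq[n]$ with $|S|=s\leq \zeta n$ satisfies $|A[S]|\leq s(s-1)<s^2\leq \zeta n\cdot s\leq \phi np\cdot s$, so the event is impossible and the probability is~$0$. Assume then that $p<\zeta/\phi$, so that $\phi np<\zeta n$. For a fixed set~$S$ of size~$s$, the random variable $|A[S]|$ has distribution $\bin(s(s-1),p)$ with mean $\mu\leq s^2p$; moreover, since $|A[S]|\leq s(s-1)$, the event $|A[S]|>\phi nsp$ forces $s>\phi np+1$. On the remaining range $s\in[\lceil\phi np\rceil+2,\lfloor\zeta n\rfloor]$ the ratio $t/\mu\geq \phi n/s\geq \phi/\zeta>e^2$ is large, and the standard upper-tail Chernoff bound yields
\[
\Pr\bigl(|A[S]|>\phi nsp\bigr)
\;\leq\;\Bigl(\frac{e\mu}{t}\Bigr)^{t}
\;\leq\;\Bigl(\frac{es}{\phi n}\Bigr)^{\phi nsp}.
\]

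After a union bound over~$S$ using $\binom{n}{s}\leq(en/s)^s$, the total probability is at most $\sum_s\exp(g(s))$, where $g(s)=s\log(en/s)+\phi nsp\log(es/(\phi n))$. Writing $u=s/n$, one checks that $g(s)=nu[A+B\log u]$ with $A=1+\phi np(1-\log\phi)$ and $B=\phi np-1$. The second derivative in~$u$ equals $nB/u>0$ (positive for large~$n$, since $\phi np\geq \phi f\to\infty$), so $g$ is a convex function of~$u$ whose unique critical point lies near $\phi/e^2>\zeta$. Hence $g$ is decreasing on $[\phi p,\zeta]$, and its maximum is attained at $s_0:=\lceil\phi np\rceil+2\approx\phi np$.

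A direct computation gives $g(s_0)=-\phi^2(np)^2\log(n/(enp))\cdot(1+o(1))$. Since $x\mapsto x^2\log(n/(ex))$ is increasing on $(0,ne^{-3/2})$, which contains the admissible range $[f,\zeta n/\phi]$ of $np$ for $\zeta$ small enough, the bound is minimized when $np=f$, giving $g(s_0)\leq -\phi^2 f^2\log(n/(ef))\cdot(1-o(1))$. Multiplying by the at most $\zeta n$ summands, the desired inequality $\Pr\leq e^{-f^2/2}$ reduces to $(\phi^2 f^2-1)\log n\geq \phi^2 f^2(\log f+1)+\log\zeta+f^2/2$, which holds for $n$ large enough because $f\to\infty$ (so $\phi^2 f^2\geq 4$ eventually) and $f\leq \zeta n/\phi$ is $o(n)$ in the non-vacuous regime (so $\log f$ stays bounded away from $\log n$ by a margin controlled by~$\zeta$). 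The main obstacle in this proof is the optimization in the third paragraph: verifying that $g$ is genuinely decreasing across the whole admissible interval, computing the correct leading order of $g(s_0)$, and confirming that this value is large enough in absolute value to absorb the $\zeta n$ factor from the union bound for every~$p\in[f/n,\zeta/\phi)$.
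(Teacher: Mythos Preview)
Your approach is essentially the same as the paper's: a union bound over sets $S$ combined with a binomial upper-tail estimate. The paper uses the crude bound $\Pr(|A[S]|\geq k)\leq\binom{s(s-1)}{k}p^k$, which yields the same $(es/(\phi n))^{\phi nsp}$ as your Chernoff form. The difference is only in how the resulting sum is controlled: the paper rewrites $(ne/s)^s(se/(\phi n))^{\lceil\phi nsp\rceil}$ as $\bigl((e^2/\phi)(se/(\phi n))^{\lceil\phi nsp\rceil/s-1}\bigr)^s$, imposes the explicit constraint $e\zeta/\phi\leq e^{-1/\phi^2}$ so that the base is at most a constant times $e^{-np/\phi}$, and sums the geometric series to get $\leq e^{-(np)^2/2}$ directly. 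Your route via convexity of $g$ and a max-term bound is valid but heavier.

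There is, however, a genuine loose end in your last paragraph. The assertion that ``$f\leq\zeta n/\phi$ is $o(n)$ in the non-vacuous regime'' is simply false: nothing prevents $f=\Theta(n)$ (e.g.\ $f=\zeta n/(2\phi)$), and then $\log f=\log n+O(1)$, so both sides of your final displayed inequality are $\Theta(f^2\log n)$ and you must compare constants. Rewriting your inequality as $\phi^2 f^2\log(n/f)\geq f^2(\phi^2+\tfrac12)+\log(\zeta n)$ and evaluating at $f=\zeta n/\phi$ forces $\phi^2\log(\phi/\zeta)>\phi^2+\tfrac12$, i.e.\ $\zeta<\phi\,e^{-1-1/(2\phi^2)}$; for small $\phi$ this is strictly stronger than your stated $\zeta<\phi/e^2$. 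Your phrase ``sufficiently small'' could accommodate this, but the justification you actually wrote does not. To repair the argument, impose an explicit constraint of this type on $\zeta$ (as the paper does) and split the verification into, say, $f\leq\sqrt n$ (where $\log(n/f)\geq\tfrac12\log n$ and the inequality is immediate once $\phi^2 f^2\geq 4$) and $f>\sqrt n$ (where $\log(\zeta n)=o(f^2)$ and the constant comparison above suffices).
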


\begin{proof}
  Let $\zeta > 0 $ be sufficiently small so that $e\zeta/\phi \leq
  e^{-1/\phi^2}$. Let $S\subseteq [n]$ be a set with size $s \leq
  \zeta n$. If $s\leq \phi n p$, then $|A[S]| \leq s(s-1) \leq \phi n
  p s$. So assume $s\geq \phi n p$. Then the probability that $|A[S]|
  > \lceil \phi n p s\rceil$ is at most $\binom{s(s-1)}{\lceil \phi n p s\rceil} p^{\lceil \phi n p s\rceil}$.
  Thus, the expected number of sets $S$ with size at most $\zeta n$ with
  $|A[S]| > \phi p n |S|$ is at most
\begin{align*}
\sum_{\phi np \leq s \leq \zeta n}& 
\binom{n}{s}\binom{s^2}{\lceil\phi nsp\rceil}p^{\lceil\phi nsp\rceil}\leq
\sum_{\phi np \leq s \leq \zeta n} 
\left(\frac{ne}{s}\right)^s\left(\frac{se}{\phi n}\right)^{\lceil\phi
  nsp\rceil} \\
&= \sum_{\phi np \leq s \leq \zeta n} \left(\left(\frac{e^2}{\phi}\right)\left(\frac{se}{\phi n}\right)^{\frac{\lceil\phi nsp\rceil}{s}-1}\right)^s\\ 
&\leq \sum_{\phi np \leq s \leq \zeta n} 
\left(\left(\frac{e^2}{\phi}\right)\left(\frac{\zeta
      e}{\phi }\right)^{\frac{\lceil\phi
      nsp\rceil}{s}-1}\right)^s\quad\text{since
}s\leq \zeta n \text{ and }np\geq f\to\infty\\
&\leq
\sum_{\phi np \leq s \leq \zeta n}
\left(\left(\frac{e^2}{\phi}\right)\left(\frac{\zeta  e}{\phi }\right)^{\phi
    np-1}\right)^s \quad\text{since
}\zeta e/\phi < 1\\
&\leq \sum_{\phi np \leq s \leq \zeta n}\left(\beta e^{-\frac{np}{\phi}}\right)^s,
\quad\text{for }\beta =e/\zeta \text{ since }
\zeta e/\phi < e^{-1/\phi^2}
\\&\leq  2(\beta e^{-\frac{np}{\phi}})^{\phi np} \leq e^{-(np)^2/2}\leq e^{-f^2/2},
\end{align*}
for $n$ sufficiently large. The result then follows
by Markov's inequality.
\end{proof}

In the next lemma, we compare $\din(S)$ and $\deltain$ in the
range where $p = (1+\Omega(1))\log n/(n-1)$ for $S$ of size from $2$
to $n-2$.
\begin{lemma}\label{lema:expansion_minDegree}
  Let $\psi$ be a positive constant. There is a positive constant
  $C$ such that, for $p = p(n) \in[(1+\psi)\log
  n/(n-1),1]$ and large $n$, the probability that there exists a set $S \subset [n]$
  with $2 \leq |S| \leq n-2$ such that $d^{in}(S) < 1.5\delta^{in}$ is
  at most $n^{-C}$.
\end{lemma}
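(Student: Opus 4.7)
The plan is to condition on a good event that combines the expansion bounds of Lemmas~\ref{lema:expansion_UpperRange_big} and~\ref{lema:expansion_UpperRange_small} with two-sided concentration of $\deltain$ and $\deltaout$, and then to verify the inequality $\din(S) \geq 1.5\,\deltain$ by a case analysis on $s = |S|$. The central tool will be the elementary identity $\din(S) = \sum_{v \in S}\din(v) - |A[S]|$, together with its dual $\din(S) = \dout(V\setminus S) = \sum_{v\in V\setminus S}\dout(v) - |A[V\setminus S]|$, both obtained by counting arcs with head in $S$ according to whether their tail lies in $S$ or not.

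I will fix small constants $\varepsilon, \phi > 0$ (in this order) and let $\zeta = \zeta(\phi) > 0$ be supplied by Lemma~\ref{lema:expansion_UpperRange_small}. By combining Lemma~\ref{lemma:minDegreeSparse}\ref{lema:minDegree1}--\ref{lema:minDegree2} (applied with $\alpha = (1\pm\varepsilon)\alpha^{\ast}$ and using strict monotonicity of $F$ to get a strictly positive gap $F((1-\varepsilon)\alpha^{\ast}) - F(\alpha^{\ast})$) in the range $p \leq n^{-1/2}$ with Lemma~\ref{lemma:concentration_degrees} and a direct Chernoff bound for $p > n^{-1/2}$, I obtain with probability at least $1 - n^{-C'}$ that $(1-\varepsilon)\alpha^{\ast} np \leq \deltain, \deltaout \leq (1+\varepsilon)\alpha^{\ast} np$, where $\alpha^{\ast} = \alpha^{\ast}(n,p) \in [\alpha_0(\psi), 1]$ is the deterministic concentration value (with $\alpha_0$ solving $F(\alpha_0) = 1/(1+\psi)$, hence bounded below uniformly in $p$). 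Lemmas~\ref{lema:expansion_UpperRange_small} and~\ref{lema:expansion_UpperRange_big} supply the additional bounds $|A[S]| \leq \phi\, n|S|p$ for all $|S| \leq \zeta n$ and $|A(S',S)| \geq \zeta^2 n^2 p/2$ for all disjoint $S, S'$ of size at least $\zeta n$, each with the desired polynomial rate.

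Conditioned on this good event, I split into three ranges for $s = |S|$. For \emph{small} $s \in [2,\zeta n]$, the identity and the bound on $|A[S]|$ give $\din(S) \geq s\,\deltain - \phi snp \geq s\,\deltain\bigl(1 - \phi/((1-\varepsilon)\alpha_0)\bigr)$; choosing $\phi$ small enough that the bracket exceeds $3/4$, together with $s \geq 2$, yields $\din(S) \geq (3/2)\,\deltain$. For \emph{medium} $s \in [\zeta n, n-\zeta n]$, taking $S' = V\setminus S$ gives $\din(S) = |A(S',S)| \geq \zeta^2 n^2 p/2$, which dominates $1.5\,\deltain = O(np)$. For \emph{large} $s \in [n-\zeta n, n-2]$, the dual identity applied to $T = V\setminus S$ of size $t \in [2,\zeta n]$ yields $\din(S) = \dout(T) \geq t\,\deltaout\bigl(1 - \phi/((1-\varepsilon)\alpha_0)\bigr)$, and concentration gives $\deltaout \geq ((1-\varepsilon)/(1+\varepsilon))\,\deltain$; a joint choice of $\varepsilon$ and $\phi$ small enough that $2\bigl(1 - \phi/((1-\varepsilon)\alpha_0)\bigr)(1-\varepsilon)/(1+\varepsilon) \geq 3/2$ closes the inequality.

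The main obstacle is coordinating the constants $\varepsilon$, $\phi$, and $\zeta$ uniformly over the full range $p \in [(1+\psi)\log n/(n-1), 1]$: the concentration value $\alpha^{\ast}$ depends on $p$, but its uniform lower bound $\alpha_0(\psi) > 0$ is what allows a single choice $\phi/((1-\varepsilon)\alpha_0) < 1/4$ to work everywhere. A secondary technicality is obtaining the two-sided concentration of the minimum degrees at a polynomial rate rather than merely \aas, which is why one must invoke the quantitative Lemma~\ref{lemma:minDegreeSparse} with small perturbations of $\alpha^{\ast}$ rather than the qualitative Corollary~\ref{lem:mindeg_concentration}.
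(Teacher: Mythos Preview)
Your approach is correct and coincides with the paper's argument for the regime $1+\psi \le \gamma \le 100$ (writing $\gamma = p(n-1)/\log n$): two-sided concentration of $\deltain,\deltaout$ around $\alpha^{\ast}np$, the two expansion lemmas, and the three-way case split on $|S|$ via the identities $\din(S)=\sum_{v\in S}\din(v)-|A[S]|$ and $\din(S)=\dout(\bar S)$. The paper, however, treats the dense regime $\gamma\ge 100$ by a different, more direct route: a single Chernoff-plus-union-bound shows $\din(S)\ge (1-\tau)|S||\bar S|p$ simultaneously for all $S$, and then a trivial upper bound on $\deltain$ finishes; no appeal to Lemmas~\ref{lema:expansion_UpperRange_big}--\ref{lema:expansion_UpperRange_small} or to min-degree concentration is needed there. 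Your unification is cleaner conceptually; the paper's split avoids having to control $\alpha^{\ast}$ over the full range.

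One technical point deserves care. Your split on $p\lessgtr n^{-1/2}$ does not match the applicability of Lemma~\ref{lemma:minDegreeSparse}\ref{lema:minDegree2}. When $p\le n^{-1/2}$ but $\gamma$ is large, $\alpha^{\ast}$ is close to $1$, so $\alpha=(1+\varepsilon)\alpha^{\ast}$ leaves the admissible range $(0,1-\eta]$, and in fact the exponent $\gamma F((1+\varepsilon)\alpha^{\ast})-1$ tends to $+\infty$ (since $F$ increases past $1$), so the bound degenerates. The upper bound on $\deltain$ at rate $n^{-C'}$ therefore cannot come from Lemma~\ref{lemma:minDegreeSparse}\ref{lema:minDegree2} uniformly. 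The fix is to split on $\gamma$ rather than on $p$: for $\gamma$ bounded, $\alpha^{\ast}$ stays bounded away from $1$ and both parts of Lemma~\ref{lemma:minDegreeSparse} apply; for $\gamma$ large, a direct Chernoff bound on each $\din(v),\dout(v)$ gives $(1-\varepsilon)np\le \deltain,\deltaout\le (1+\varepsilon)np$ with failure probability $n\exp(-\Theta(\gamma\log n))$, which already yields $\deltaout/\deltain\ge (1-\varepsilon)/(1+\varepsilon)$ as needed in your large-$s$ case. This is exactly why the paper caps at $\gamma=100$ before invoking the concentration corollary.
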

\begin{proof} Let $\gamma(n)$ be such that $p(n-1) = \gamma(n) \log
  n$. First assume that $\gamma(n) \geq 100$. Let $0 < \epsilon_1 < 5/2
  - \sqrt{6}$, $\tau = 2(1+\epsilon_1)/10$ and $\epsilon_2>0$ be such
  that $2(1-\tau)(1-\epsilon_1)\geq 1.5(1+\eps_2)$.
                
  Let $S\subseteq [n]$ and let $\bar{S} = [n]\setminus S$. Then
  $\din(S) = \dout(\bar{S}) = |A(\bar S, S)|$ which is distributed as
  $\bin(s \bar{s}, p)$ where $s = |S|$ and $\bar{s} = |\bar{S}|$. Then,
  by Chernoff's inequality (Theorem~\ref{teo:chernoff}) and the union
  bound, the
  probability that there exists a set $S \subset [n]$ with $2 \leq |S|
  \leq n-2$ such that $d^{in}(S) < (1-\tau)s\bar{s} p$ is at most
	\begin{align}
		\sum_{2 \leq s \leq n-2}\binom{n}{s}\exp\left(-\frac{\tau^2s\bar{s}p}{2}\right) 
			& \leq 2\sum_{2 \leq s \leq n/2}\binom{n}{s}\exp\left(-\frac{\tau^2s\bar{s}p}{2}\right) \nonumber \\
			& \leq 2 \sum_{2 \leq s \leq n/2} \exp\left(s + s\log n - s\log s - \frac{\tau^2|S||\bar S|p}{2}\right) \nonumber \\
			& \leq 2 \sum_{2 \leq s \leq n/2} \exp\left(s\log n\left( 1 + \frac{1 - \log s}{\log n} - \frac{\gamma\tau^2}{4}\right)\right) \label{step1}\\
			& \leq 2 \sum_{2 \leq s \leq n/2} \exp\left(-\epsilon_1 s\log n\right) \leq 4n^{-2\eps_1}. \label{step2}
	\end{align}
  To obtain~\eqref{step1}, we use $|S||\bar{S}| =s(n-s)\geq sn/2$, and to obtain~\eqref{step2}, we use $\gamma \tau^2/4 \geq (1+\eps_1)^2$.
        Note that, if $|S| \leq \epsilon_1 n$ or $|\bar{S}|\leq
        \epsilon_1 n$, by our choice of $\tau$, $\epsilon_1$ and $\epsilon_2$,
	\begin{equation}
          (1-\tau)|S||\bar S|p \geq (1-\tau)2(1-\epsilon_1)np \geq 
          1.5(1+\eps_2) np.
	\end{equation}
        If $|S| \geq \epsilon_1 n$ and $|\bar S|
        \geq \epsilon_1 n$, then for sufficiently large $n$,
	\begin{equation}
          (1-\tau)|S||\bar S|p \geq (1-\tau)\epsilon_1^2n^2p > 1.5(1+\eps_2) np.
	\end{equation}
       It is easy to see that $\deltain \leq (1+\eps_2)np$ with very large probability. Indeed, by Chernoff's inequality (Theorem~\ref{teo:chernoff}), the total number of arcs  
        in the random digraph satisfies $|A| \geq
        (1+\eps_2)n(n-1)p$ with probability at most $\exp(-\eps_2^2
        n(n-1)p/2) \leq n^{-C}$ for any $C>0$.
        
        Now assume that $1+\psi \leq \gamma(n) \leq 100$. By
        Corollary~\ref{lem:mindeg_concentration}, there exist $\alpha_1(n)$,
        $\alpha_2(n)$ and a constant $x_1>0$ such that
        \begin{equation*}
         x_1< \alpha_1(n) <\alpha_2(n)<1\text{,\quad}
         \alpha_2(n)-\alpha_1(n) \leq x_1/7
        \end{equation*}
        and
        \begin{equation*}
             \alpha_1pn\leq \deltain\leq \alpha_2 pn
             \quad\text{ and }\quad
   \alpha_1pn\leq \deltaout\leq \alpha_2 pn.
 \end{equation*}

 Let $\zeta >0$ be given by
 Lemma~\ref{lema:expansion_UpperRange_small} applied to
 $\phi=x_1/7$  and $f(n) = \gamma(n)\log n$. Let also $C_2$ be given by 
  Lemma~\ref{lema:expansion_UpperRange_big} for $\zeta$ and $f(n)$.
 
Fix $S\subseteq[n]$. If
 $|S|,|\bar S| \geq \zeta n$, by
 Lemma~\ref{lema:expansion_UpperRange_big}, $d^{in}(S) = |A(\bar
 S,S)| \geq (\zeta^2/2)n^2p \geq 1.5 \alpha_2 n p \geq 1.5\deltain$ with
 probability at least $1-n^{-C_2}$ for sufficiently large $n$. If $2 \leq |S| <
 \zeta n$, by Lemma ~\ref{lema:expansion_UpperRange_small}, with
 probability at least $1-e^{-(\gamma\log n)^2/2}$, for sufficiently
 large $n$,
        \begin{align*}
          d^{in}(S) 	&= \sum_{v \in S}d^{in}(v) - |A[S]|
          \geq \delta^{in}|S| - \phi np|S|
          \\
          &\geq \delta^{in}|S| - \alpha_1 np|S|/7 \geq 6\deltain |S|/7
          \geq 1.5\delta^{in}.
        \end{align*}
        If $2 \leq |\bar S| \leq \zeta n$, then
        \begin{align}
        	d^{out}(\bar S)	&
                =\sum_{v \in \bar S}d^{out}(v) - |A[\bar S]|
        	\geq \delta^{out}|\bar S| - \phi np|\bar S| \\
     		&> |\bar S| n p(\alpha_1 - \phi)  \geq 1.5 np\alpha_2 \geq 1.5\delta^{in}
        \end{align}
        since $2(\alpha_1-\phi)>1.5(\alpha_1+\phi)\geq 1.5\alpha_2$, and the
        result follows since $d^{in}(S) = d^{out}(\bar S)$. Observe that it suffices to fix $C<C_2$ to get the desired result.
       \end{proof}

The next lemma will be useful when we apply Theorem~\ref{teo:Frank} to subpartitions with a very large class, namely subpartitions where one part
contains a $(1-\eps)$-fraction of the vertex set.
\begin{lemma}\label{lemma:critical_bigSet} There exist positive constants
    $\phi$ and $\psi$ such that the following holds. For any function $g = g(n)$
  such that $0\leq g(n) = o(\log n)$, there exist
  positive constants $\epsilon >0$ and $C>0$ such that, for
  any $p=p(n)\in [(\log(n)-g(n))/(n-1),(\log(n)+g(n))/(n-1)]$ and for large $n$, with
  probability at least $1-n^{-C}$, there is no partition
  $(X,Y,Z)$ of $[n]$ of the vertex set of $\Dnp$ satisfying the following conditions:
  \begin{itemize}
  \item[(i)] $|X|\geq (1-\eps)n$,
  \item[(ii)] $|Y|\geq |Z|$ or $|Z|\leq \phi p (n-1)$,
  \item[(iii)] $\din(X)+\din(Y) \leq \psi p (n-1) |Y|$.
  \end{itemize}
\end{lemma}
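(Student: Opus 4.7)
The plan is to show that for every partition $(X,Y,Z)$ satisfying (i)--(ii), we have $\din(X) + \din(Y) \geq \psi p(n-1)|Y|$ with high probability, which directly contradicts (iii) for a suitable absolute constant $\psi>0$. The main engine will be a pointwise lower bound $\din(v) + \dout(v) > \alpha p(n-1)$ valid for every $v$, combined with control of the arcs living inside $W := Y\cup Z$.

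First, I fix a small absolute constant $\alpha \in (0,1)$ with $2F(\alpha) > 1$ (any $\alpha$ below the root of $F(x)=1/2$ works). For $p(n-1) = \log n + h$ with $|h|=o(\log n)$, Lemma~\ref{lemma:minDegreeSparse}\ref{eq:smalldindout} gives an $O\bigl(n^{1-2F(\alpha) + o(1)}\bigr) = O(n^{-C_1})$ bound, for some $C_1>0$, on the probability that some vertex has both $\din(v), \dout(v) \leq \alpha p(n-1)$; on the complementary event, $\din(v) + \dout(v) > \alpha p(n-1)$ for every $v$. Second, for a small absolute constant $\phi_0 > 0$ to be chosen, Lemma~\ref{lema:expansion_UpperRange_small} supplies $\zeta = \zeta(\phi_0)>0$ such that, with probability at least $1 - e^{-(np)^2/2}$, every $S\subseteq[n]$ with $|S|\leq \zeta n$ satisfies $|A[S]| \leq \phi_0 p(n-1)|S|$. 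I take $\eps \leq \zeta/2$, so that $|W| \leq \eps n \leq \zeta n$ lies inside the range where the expansion bound applies.

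Now fix any partition $(X,Y,Z)$ satisfying (i)--(ii) and condition on the two good events above. Expanding each arc count by its endpoints yields
\begin{equation*}
\din(X) + \din(Y) \geq |A(X,Y)| + |A(Y,X)| = \sum_{v\in Y}\bigl(\din(v)+\dout(v)\bigr) - 2|A[Y]| - |A(Y,Z)| - |A(Z,Y)|,
\end{equation*}
which is at least $(\alpha - 2\phi_0)\, p(n-1)|Y| - |A(Y,Z)| - |A(Z,Y)|$. In the sub-case $|Y|\geq |Z|$ of (ii), $|W|\leq 2|Y|\leq \zeta n$, so $|A(Y,Z)| + |A(Z,Y)| \leq |A[W]| \leq 2\phi_0\, p(n-1)|Y|$, yielding $(\alpha - 4\phi_0)\, p(n-1)|Y|$. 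In the sub-case $|Z|\leq \phi p(n-1)$, the trivial bound $|A(Y,Z)|+|A(Z,Y)|\leq 2|Y||Z|\leq 2\phi\, p(n-1)|Y|$ yields $(\alpha - 2\phi_0 - 2\phi)\, p(n-1)|Y|$. Choosing $\phi = \phi_0 = \alpha/16$ and $\psi = \alpha/2$, both bounds exceed $\psi\, p(n-1)|Y|$, contradicting (iii).

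The only delicate point I anticipate is making the degree estimate uniform over the range $p(n-1) = \log n + o(\log n)$: one must choose $\alpha$ so that $2F(\alpha) - 1$ is a strict positive constant, absorbing the $\exp(O(g))$ slack from $h$ into an $n^{o(1)}$ factor and producing a genuine polynomial tail $n^{-C}$. Once $\alpha$ is fixed the other constants cascade in the correct order ($\phi_0 = \phi = \alpha/16$; then $\zeta$ from Lemma~\ref{lema:expansion_UpperRange_small}; then $\eps \leq \zeta/2$), and both $\phi$ and $\psi$ are absolute, as the statement requires.
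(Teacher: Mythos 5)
Your proof is correct and follows essentially the same strategy as the paper: lower-bound $\din(X)+\din(Y)$ by expanding $\sum_{v\in Y}(\din(v)+\dout(v))$, subtract the internal arcs of $Y$ and $Y\cup Z$, then invoke Lemma~\ref{lemma:minDegreeSparse}\ref{eq:smalldindout} for the pointwise degree bound and Lemma~\ref{lema:expansion_UpperRange_small} for the arc density inside small sets. The only cosmetic differences are that you discard the nonnegative term $|A(Z,X)|$ (the paper keeps the exact identity and drops it later), use the slightly weaker consequence $\din(v)+\dout(v)>\alpha p(n-1)$ rather than $2\alpha p(n-1)$, and pick different numerical constants; none of this affects the argument.
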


\begin{proof}
  Let $\phi \leq 3/400$ and $\psi < 3/20$. Let $\zeta > 0$ be
  obtained by Lemma~\ref{lema:expansion_UpperRange_small} with $f(n) =
  \log n -g(n)$ and $\phi$. Let $\epsilon \leq \zeta$ and let
  $(X,Y,Z)$ be a partition of $[n]$ satisfying conditions (i) and (ii) of the
  lemma. By Lemma~\ref{lemma:minDegreeSparse}\ref{eq:smalldindout}
  with $\alpha = 0.09$, we have, for $n$ suficiently
  large, with probability at least $1-n^{-0.2}$, that $d^{in}(v) +
  d^{out}(v) \geq 0.18(n-1)p$ for all $v \in Y$. 
  Lemma~\ref{lema:expansion_UpperRange_small}, we have that $|A[Y]|
  \leq \phi(n-1)p|Y|$ and $| A[Y\cup Z]| \leq \phi(n-1)p(|Y|+|Z|)$
  with probability at least $1-e^{-\log^2n/4}$. Thus,
  \begin{align*}
    d^{in}(X) + d^{in}(Y)
    & = \sum_{v \in Y}\left(d^{in}(v) + d^{out}(v) \right) - 2|A[Y]| - | A (Y,Z)| + | A(Z,X)|. 
    \\
    &\geq
    0.18(n-1)p|Y|-2\phi(n-1)p|Y|-\min\{| A[Y\cup Z]|,|Y||Z|\}
    \\
    &\geq 0.18(n-1)p|Y|-2\phi(n-1)p|Y|-2\phi (n-1) p |Y|
    \\
    &\geq 0.15(n-1)p|Y|,
\end{align*}
as required.
\end{proof}

In the next two lemmas, we bound $\din(S)$ in the range $p \sim \log
n/(n-1)$.

\begin{lemma}\label{lemma:critical_smallSet} 
  Let $g = g(n)$ be a function such that $0\leq g(n) = o(\log n)$.
  There exist positive constants $\eta >0$ and $C>0$
  with the following properties. For all functions $(\log n - g(n))/(n-1) \leq p =
  p(n) \leq (\log n + g(n))/(n-1)$, with probability at least
  $1-n^{-C}$, there are at least two
  vertices with in-degree zero or there is no $S\subseteq [n]$ with size $2\leq
  |S|\leq \eta n$ such that $d^{in}(S) <
  \max\{\delta^{in}+1,2\deltain\}$.
\end{lemma}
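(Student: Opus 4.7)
The plan is to work on the high-probability event where the conclusions of Lemma~\ref{lemma:light_vertex}, Lemma~\ref{lema:expansion_UpperRange_small} and Corollary~\ref{lem:mindeg_sparse_o} all hold simultaneously. I would fix a constant $\varphi$ slightly above $1$ and let $\eps>0$ be the constant provided by Lemma~\ref{lemma:light_vertex}; I would then pick a small $\phi\leq\eps/10$, let $\zeta>0$ be the constant given by Lemma~\ref{lema:expansion_UpperRange_small} applied with $f(n)=\log n-g(n)$, and set $\eta\leq\zeta$. Conditioned on the good event, $|A[S]|\leq\phi np|S|$ for every $S$ with $|S|\leq\eta n$, no two $\eps$-in-light vertices are adjacent or share an in-neighbour, and $\deltain=o(\log n)$. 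A union bound shows these events fail jointly with probability at most $n^{-C}$ for some $C>0$.

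Fix $S\subseteq[n]$ with $2\leq|S|\leq\eta n$ and partition $S=L\cup H$, where $L=\{v\in S\colon \din(v)\leq\deltain+\eps np\}$ is the set of $\eps$-in-light vertices inside $S$. By Lemma~\ref{lemma:light_vertex}, the in-neighbourhoods of distinct vertices of $L$ are pairwise disjoint and each is disjoint from $L$ itself, so at most $|H|$ of the arcs counted by $\sum_{v\in L}\din(v)$ can land inside $S$; this gives $|A(V\setminus S,L)|\geq\sum_{v\in L}\din(v)-|H|$. Combining with the trivial $|A(V\setminus S,H)|\geq\sum_{v\in H}\din(v)-|A[S]|$ and the bound $|A[S]|\leq\phi np|S|$ from Lemma~\ref{lema:expansion_UpperRange_small}, and noting that $\din(v)>\deltain+\eps np$ for $v\in H$, one obtains (assuming $\deltain\geq 1$)
\[
\din(S)\geq|S|\deltain+|H|(\eps np-1)-\phi np|S|.
\]
From here, a case split on whether $|H|\geq 2\phi|S|/\eps$ or not wraps things up: in the first case the slack $|H|(\eps np-1)-\phi np|S|$ is non-negative (because $\phi<\eps/2$), yielding $\din(S)\geq|S|\deltain\geq 2\deltain$; in the second case $|L|>(1-2\phi/\eps)|S|$, and a short computation using the weaker bound $\din(S)\geq|L|\deltain-|H|$ shows $\din(S)\geq 2\deltain$ whenever $|S|\geq 4$. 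The leftover tiny sizes $|S|\in\{2,3\}$ force $|H|=0$ and hence $\din(S)=\sum_{v\in S}\din(v)\geq|S|\deltain\geq 2\deltain$.

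The case $\deltain=0$ is the main obstacle, because the lower bound $\sum_{v\in L}\din(v)\geq|L|\deltain$ collapses to zero. Here the hypothesis that there is at most one vertex of in-degree zero promotes this lower bound to $\sum_{v\in L}\din(v)\geq|L|-1$, producing instead
\[
\din(S)\geq(|L|-1)+|H|(\eps np-1)-\phi np|S|.
\]
The same dichotomy on $|H|/|S|$ delivers $\din(S)\geq 1=\max\{\deltain+1,2\deltain\}$ in both subcases, finishing the proof. The delicate point in the whole argument is choosing $\phi,\eps,\eta$ so that the constants in the case split cooperate uniformly for all $|S|$ in the range $[2,\eta n]$ and all values of $\deltain$ (in particular $\deltain=1$, where the slack in the second case is only barely sufficient and requires the integer constraint $|L|-|H|\geq 2$ inherited from $|L|>(1-2\phi/\eps)|S|$).
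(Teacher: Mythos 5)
Your proof is correct and follows essentially the same route as the paper's. It relies on the same three ingredients (Lemma~\ref{lemma:light_vertex}, Lemma~\ref{lema:expansion_UpperRange_small}, Corollary~\ref{lem:mindeg_sparse_o}), the same split of $S$ into $\eps$-in-light and heavy vertices, and a comparable case analysis on the fraction of heavy vertices; the unified lower bound $\din(S)\geq|S|\deltain+|H|(\eps np-1)-\phi np|S|$ simply packages the paper's separate estimates (all-light, one heavy vertex with small $|S|$, $|S_h|\geq|S|/8$, $|S_h|<|S|/8$) a bit more compactly, with the $\deltain=0$ correction handled in the same way via the at-most-one-zero-in-degree assumption.
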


\begin{proof}
  Let $p = p(n)$ be a function as in the statement of the lemma and let $\eps>0$ and $C_1$ be obtained through
  Lemma~\ref{lemma:light_vertex} with $\varphi=1.1$.  By Corollary~\ref{lem:mindeg_sparse_o} we have $\deltain < (\eps/16) \log n$ with probability at least $1-n^{C_0}$ for some constant $C_0$.
  Let $\eta=\zeta>0$ given by
  Lemma~\ref{lema:expansion_UpperRange_small} applied to $f(n)=\log{n}-g(n)$ and $\phi \leq \epsilon/16$. 
  Assume that the random digraph has at most one vertex with in-degree zero and fix $S \subseteq [n]$ with size $2\leq
  |S|\leq \eta n$. By Lemma~\ref{lema:expansion_UpperRange_small}, with probability at least $1-e^{-\log^2n/4}$, $|A[S]| \leq \eps n |S| p/16$.
  Let $S_\ell$ denote the $\eps$-in-light vertices in $S$ and let
  $S_h = S\setminus S_\ell$. 

  First assume that all vertices in $S$ are $\eps$-in-light. By Lemma~\ref{lemma:light_vertex}, with
  probability at least $1-n^{-C_1}$, no pair of $\eps$-in-light
  vertices are adjacent, and thus $d^{in}(S) \geq |S|\delta^{in} \geq 2\delta^{in} \geq
  \deltain+1$ if $\deltain>0$. If $\deltain=0$, because there is a single
  vertex with $\din(v)=0$, we have $\din(S) \geq |S|-1\geq
  1=\max\{2\deltain,\deltain+1\}$. Next suppose that there is at
  least one vertex $u$ in $S$ that is not $\eps$-in-light. Note that
  $\din(u)\geq \deltain + \eps np$, which implies that, for $|S|< \eps
  n p/2$, we have $\din(S) \geq |A(\bar{S}, u)|\geq \din(u) -|S|+1\geq
  \deltain+\eps np /2\geq 2\deltain+1$ for large $n$. So we can
  assume that $|S|\geq \eps n p/2$.

  If $|S_h|\geq |S|/8$, then $\din(S) \geq \sum_{v \in S_h}\din(v) - |A[S]|
  \geq |S|\eps n p/8 -  \eps np|S|/16 \geq \epsilon(n-1)p/8 \geq 2\deltain+1$. So assume that $|S_h|\leq
  |S|/8$. Thus, $|S_\ell|\geq 7|S|/8$. Then $\din(S) \geq \deltain
  |S_\ell| - |S_h|$ since no pair of $\eps$-in-light vertices have a
  common in-neighbour by Lemma~\ref{lemma:light_vertex}. If $\deltain > 0$, then $\din(S) \geq |S_\ell|
  - |S_h| \geq 3\epsilon(n-1)p/8\geq 2\deltain+1$. So assume
  that $\deltain = 0$.  Since there is a single vertex with in-degree
  zero, we have $\din(S) \geq |S_\ell|-1 - |S_h| \geq \epsilon(n-1)p/4 \geq 2\deltain+1$.
\end{proof}

\begin{lemma}\label{lemma:out}
  Let $\phi > 0$ be a constant and $g = g(n)$ be a function such that $0\leq g(n) = o(\log n)$. 
  There exist positive constants $\eta >0$ and $C>0$ such that the following holds for all functions $(\log n -
  g(n))/(n-1) \leq p = p(n) \leq (\log n + g(n))/(n-1)$. With
  probability at least $1-n^{-C}$, there exist two vertices
  with in-degree zero or there is no $S\subseteq [n]$ with size
  $\phi\log n \leq |S|\leq \eta n$ such that $d^{out}(S) <2\deltain+1$.
\end{lemma}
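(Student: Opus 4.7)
The plan is to mirror the proof of Lemma~\ref{lemma:critical_smallSet}, replacing $d^{in}(S)$ by $d^{out}(S)$, and adding an extra case split to handle the possibility $\deltaout = 0$: since the lemma's exceptional clause only controls vertices of in-degree zero, one must separately argue that either conclusion~(a) holds \aas{} or the number of out-degree zero vertices is much smaller than $|S|$. I would apply Corollary~\ref{lem:mindeg_sparse_o} (and its out-degree analogue) to get $\deltain, \deltaout = o(\log n)$ \aas; Lemma~\ref{lemma:light_vertex} with $\varphi = 1.1$ (valid since $g(n) = o(\log n)$ eventually puts $p$ in $[0.9\log n/(n-1), 1.1\log n/(n-1)]$) to obtain some $\eps > 0$ such that \aas{} no two $\eps$-out-light vertices are adjacent or share a common out-neighbor; and Lemma~\ref{lema:expansion_UpperRange_small} with $\phi_0 \leq \eps/16$ and $f(n) = \log n - g(n)$ to get $\zeta > 0$ with $|A[S]| \leq \phi_0 np|S|$ for every $|S|\leq \zeta n$. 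I set $\eta := \zeta$.

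Let $\mu := n(1-p)^{n-1}$ be the common expectation of $|\{v \colon \din(v) = 0\}|$ and $|\{v \colon \dout(v) = 0\}|$; each is binomial since the defining events are independent across vertices. I split on $\mu$. If $\mu \geq \phi \log n/(4e^2)$, Chernoff's inequality (Theorem~\ref{teo:chernoff}) applied to $|\{v \colon \din(v) = 0\}|$ yields at least $\mu/2 \geq 2$ vertices of in-degree zero with probability at least $1 - e^{-\mu/8} \geq 1 - n^{-\phi/(32e^2)}$, giving conclusion~(a). Otherwise $\mu < \phi \log n/(4e^2)$, and the standard tail bound $\Pr(X \geq k) \leq (e\mu/k)^k$ for binomials, applied to $W := |\{v \colon \dout(v) = 0\}|$ at $k = \phi \log n/4$, gives $W < \phi \log n/4$ with probability at least $1 - n^{-\phi/4}$; from now on I condition on this event together with all previously established structural events.

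Fix $S$ with $\phi \log n \leq |S| \leq \eta n$ and partition $S = S_\ell \cup S_h$, where $S_\ell$ is the set of $\eps$-out-light vertices of $S$. If $|S_h| \geq |S|/4$, every $v \in S_h$ has $\dout(v) \geq \eps np$, hence $d^{out}(S) \geq \sum_{v \in S_h}\dout(v) - |A[S]| \geq \eps np|S|/4 - \phi_0 np|S| \geq \eps np|S|/8 = \Omega(\log^2 n) \gg 2\deltain + 1 = o(\log n)$. Otherwise $|S_\ell| \geq 3|S|/4$, and Lemma~\ref{lemma:light_vertex} ensures that $S_\ell$ has no internal arc and that the out-neighborhoods of its vertices are pairwise disjoint, so the number of arcs from $S_\ell$ into $S$ is at most $|S_h|$, giving $d^{out}(S) \geq \sum_{v \in S_\ell}\dout(v) - |S_h|$. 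If $\deltaout \geq 1$, this already yields $d^{out}(S) \geq |S_\ell| - |S_h| \geq |S|/2$; if $\deltaout = 0$, the bound $W \leq \phi \log n/4 \leq |S|/4$ still gives $\sum_{v \in S_\ell}\dout(v) \geq |S_\ell| - W \geq |S|/2$, hence $d^{out}(S) \geq |S|/4$. Either way $d^{out}(S) \geq \phi \log n/4 \gg 2\deltain + 1$ for large $n$, proving conclusion~(b).

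The hardest step is the sub-case $\deltaout = 0$ with $|S_\ell| \geq 3|S|/4$: the shortcut in Lemma~\ref{lemma:critical_smallSet} of invoking a single zero-degree vertex is not available, since the exceptional clause of the present lemma concerns in-degree but the quantity bounded is $\dout(S)$. The dichotomy on $\mu$ (giving conclusion~(a) in Case~I by Chernoff, and in Case~II the quantitative tail bound on $W$) is the key device that closes this gap.
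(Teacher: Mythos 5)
Your proof is correct and follows essentially the same approach as the paper's: both argue that since the in-degree-zero and out-degree-zero counts have identical distributions, when the common expectation $\mu=n(1-p)^{n-1}$ is large one gets two in-degree-zero vertices (conclusion~(a)) by Chernoff, and when $\mu$ is small one can bound the number $W$ of out-degree-zero vertices and absorb it into the light/heavy decomposition via $\dout(S)\geq\sum_{v\in S_\ell}\dout(v)-|S_h|$. The paper phrases the dichotomy as a threshold on $h(n)$ (namely $h(n)\gtrless-\log(\psi\log n)$) and bounds $W$ via Chernoff, whereas you split directly on $\mu$ and use the elementary bound $\Pr(X\geq k)\leq(e\mu/k)^k$ in the small-$\mu$ regime; this is a cosmetic difference, and your Markov-type bound is arguably cleaner since the paper's stated Chernoff (with $0<\tau<1$) does not directly cover the case when $\mu$ is much smaller than the target threshold.
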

\begin{proof}
  Let $\phi>0$ be constant. Let $p = (\log(n)+h(n))/(n-1)$
  such that $|h(n)| \leq g(n)$ and let $\eps>0$ and $C_1$ be obtained
  through Lemma~\ref{lemma:light_vertex} with $\varphi=1.1$.
  We may assume that $\phi< \eps/16$. By
  Corollary~\ref{lem:mindeg_sparse_o} we have  $\deltain < (\phi/16) \log n$ with probability at least $1-n^{C_0}$ for some constant $C_0$. Let $\eta=\zeta>0$   from
  Lemma~\ref{lema:expansion_UpperRange_small} applied to $\phi$ and
  let $\psi \leq \phi/2$. 
  
  If $h(n) < -\log (\psi \log n)$, at least two vertices have
  in-degree zero \aas{} by Chernoff's inequality (Theorem~\ref{teo:chernoff}), so we consider the case $h(n) \geq -\log (\psi \log n)$.
  
  Assume that at most one vertex has in-degree zero and fix $S \subseteq [n]$ with size $\phi \log{n} \leq
  |S|\leq \eta n$. By Lemma~\ref{lema:expansion_UpperRange_small}, $|A[S]| \leq \eps n |S| p/16$ with probability at least $1-e^{-\log^2n/4}$.
 Let $S_\ell$ denote the $\eps$-out-light vertices in $S$ and let
  $S_h = S\setminus S_\ell$. If $|S_h|\geq |S|/8$, then $\dout(S) \geq \sum_{v\in S_h}\dout(v) - |A[S]| \geq
  \eps n p |S|/8 - \epsilon np|S|/16\geq 2\deltain+1$. So
  assume that $|S_h|\leq |S|/8$, and thus $|S_\ell|\geq 7|S|/8$. Then
  $\dout(S) \geq \deltaout |S_\ell| - |S_h|$ since no pair of
  $\eps$-out-light vertices have a common out-neighbour. 
  If $\deltaout
  > 0$, then $\dout(S) \geq |S_\ell| - |S_h| \geq (3/8)\epsilon(n-1)p \geq
  2\deltain+1$.
  We may assume that $\deltaout = 0$.
  We first estimate the number of vertices with out-degree zero.
  The expected number of vertices with out-degree zero is
  $n(1-p)^{n-1} = \exp(-h(n) +o(1))$. It is obvious that the smaller is
  $h(n)$, the higher is the number of vertices with out-degree zero. If
  $h(n) = -\log(\psi\log n)$, then the expected number of
  vertices with out-degree zero is equal to $\psi\log n (1+o(1))$.
  Thus, by Chernoff's inequality (Theorem~\ref{teo:chernoff}), there exists
  $C>0$ such that the probability that there are more than $5\phi \log
  n/8$  vertices with out-degree zero is at most $\exp(-C\log n)$.
  Then, $\dout(S) \geq |S_\ell|-(5\phi/8)\log n-|S|/8\geq 3|S|/4 - 5\phi\log n/8 \geq
  2\deltain+1$ for $h(n) \geq -\log(\psi\log n)$.
\end{proof}

\section{Proof of Theorem~\ref{Th:MainResult}}

In the proof of Theorem~\ref{Th:MainResult}, we consider four different probability regimens, which are described in the following result.
\label{sec:main}
\begin{lemma}\label{lemma:TeoPrinc_Ranges}
  Let $p = p(n) \in[0,1]$. If
  \begin{itemize}
  \item[(i)] $p \leq (\log (n) -g(n))/(n-1)$, for a function $g(n) =
    \Omega(\log{n})$; or
  \item[(ii)] $(\log (n) -g(n))/(n-1) \leq p \leq (\log(n)
    +g(n))/(n-1)$, for a function $g(n) = o(\log{n})$; or
  \item[(iii)] $p \sim (1+\psi)\log(n)/(n-1)$, for a constant $\psi >
    0$; or
  \item[(iv)] $p = g(n)\log(n)/(n-1)$, for a function $g(n)= \omega(1)$,
  \end{itemize}
  then   $\uptau(\cald(n,p))= \lambda(\Dnp)$ a.a.s.
\end{lemma}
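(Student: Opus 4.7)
My plan is to apply Frank's theorem (Theorem~\ref{teo:Frank}) with $k=\lambda$. The upper bound $\uptau(\Dnp)\leq\lambda(\Dnp)$ holds deterministically, as sketched in the discussion following Theorem~\ref{Th:MainResult}, so in each of the four regimes it suffices to show that, a.a.s., every subpartition $\mathcal P$ of $[n]$ with $t=|\mathcal P|$ parts satisfies $\sum_{U\in\mathcal P}\din(U)\geq\lambda(t-1)$. A key preliminary observation is that, for subpartitions consisting only of singletons, Frank's inequality is (after picking the $s$ vertices of smallest in-degree as the worst case and grouping terms $\lambda-d_v$) equivalent to the defining condition~\eqref{eq:lambdadef} of $\lambda$ with $\ell=\lambda$, and therefore holds automatically. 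Hence the substance of the proof is to show that every non-singleton part $U$ satisfies $\din(U)\geq\lambda$ a.a.s.: combined with the singleton bound this yields $\sum_U\din(U)\geq\lambda(t-1)$.

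Case~(i) is handled directly. For $p\leq(\log n-g(n))/(n-1)$ with $g(n)=\Omega(\log n)$, the expected number of vertices of in-degree zero is $n^{\Omega(1)}$, and a Chebyshev argument using independence of the in-degrees shows that a.a.s.\ there are at least two such vertices. By Claim~\ref{claim_aux}, $\lambda=0$, while simultaneously $\uptau=0$, because no arborescence can contain two vertices of in-degree zero (each arborescence has a unique root).

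For Cases~(iii) and~(iv), in-degrees are well concentrated: Corollary~\ref{lem:mindeg_concentration} (Case~iii) and Lemma~\ref{lemma:concentration_degrees} (Case~iv), combined with Lemma~\ref{lemma:LambdaUpcriticalConst} and Corollary~\ref{corollary:lambda_dense}, give $\lambda\sim\deltain$. Lemma~\ref{lema:expansion_minDegree}, which applies in both regimes since $p\geq(1+\Omega(1))\log n/(n-1)$, then yields $\din(U)\geq1.5\deltain\geq\lambda$ for every part $U$ with $2\leq|U|\leq n-2$. The only remaining situation is a part of size $n-1$, which forces $t\leq 2$ and reduces the Frank inequality to $\din(v)+\dout(v)\geq\lambda$ for the unique omitted vertex $v$; this follows from the same concentration estimates.

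Case~(ii) is the main obstacle, because $p$ lies near the connectivity threshold, $\lambda$ may be as small as $0$ or $1$, and uniform expansion is unavailable. I would classify the parts of $\mathcal P$ by size into singletons, small parts ($2\leq|U|\leq\eta n$), medium parts ($\eta n<|U|<(1-\epsilon)n$), and at most one large part ($|X|\geq(1-\epsilon)n$). Lemma~\ref{lemma:critical_smallSet} yields $\din(U)\geq\max\{\deltain+1,2\deltain\}\geq\lambda$ for each small non-singleton (the exceptional hypothesis $Y_0\geq 2$ already forces $\lambda=0$), and Lemma~\ref{lema:expansion_UpperRange_big} gives $\din(U)\gg\lambda$ for medium parts since both $|U|$ and $|U^c|$ exceed $\zeta n$. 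The delicate subcase is when some part $X$ is large: set $W=\bigcup_{U\in\mathcal P,\,U\neq X}U$ and $Z=[n]\setminus(X\cup W)$, and apply Lemma~\ref{lemma:critical_bigSet} to the partition $(X,W,Z)$. If $|W|\geq|Z|$ or $|Z|\leq\phi p(n-1)$, that lemma forces $\din(X)+\din(W)\geq\psi p(n-1)|W|\geq\psi p(n-1)(t-1)\geq\lambda(t-1)$, using $|W|\geq t-1$ and $\lambda=o(p(n-1))$ in this regime; since $\sum_U\din(U)\geq\din(X)+\din(W)$ (arcs between different non-$X$ parts only inflate the left side), the Frank inequality follows. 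Otherwise $|W|<|Z|$ and $|Z|>\phi p(n-1)$, so $W\cup Z$ has size between $\phi\log n$ and $\eta n$; Lemma~\ref{lemma:out} applied to $W\cup Z$ gives $\din(X)=\dout(W\cup Z)\geq2\deltain+1\geq\lambda$, and the remaining parts (all contained in the small set $W$) are handled by the small-part and singleton bounds above.
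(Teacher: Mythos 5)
Your proposal is correct and essentially reproduces the paper's proof: the reduction via Frank's theorem with $k=\lambda$, absorbing the singleton deficit through the defining inequality for $\lambda$ at $\ell=\lambda$, the same four-way case split by $p$-regime, and the same expansion lemmas in each regime (in particular the joint treatment of the large part in regime~(ii) via Lemma~\ref{lemma:critical_bigSet} and Lemma~\ref{lemma:out}, and Lemma~\ref{lema:expansion_minDegree} plus the minimum-degree concentration results in regimes (iii) and~(iv)). One small caveat worth flagging: your preliminary reduction ``it suffices to show every non-singleton part has $\din(U)\geq\lambda$'' is not literally what happens in regime~(ii) when a part $X$ covers nearly all of $[n]$, since there $\din(X)$ itself may fall below $\lambda$; your actual argument correctly sidesteps this by bounding $\din(X)+\din(W)$ jointly against $\lambda(t-1)$, which is exactly how the paper handles that subcase.
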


We will now show how Theorem~\ref{Th:MainResult} follows from the
lemma above. Let $p = p(n) \in [0,1]$. Let $A_{n}$ be the
event that $\uptau(\cald(n,p)) = \lambda(\Dnp)$ and let $\bar
A_{n}$ be its complement. We
will show that $\lim_{n\to\infty }\Pr(\bar
A_{n}) = 0$. We will use the subsubsequence principle. To this end,
let $(n_i)_{i\in \bbn}$ be an arbitrary increasing sequence
where $n_i\in\bbn$ for all $i\in\bbn$. It suffices
  to show that there is a subsequence $(m_i)_{i\in\bbn}$ of
$(n_i)_{i\in \bbn}$ such that $\lim_{i\to\infty}\Pr(\bar A_{m_i}) =
0$.

Let $h(n) = (n-1)p/\log n$. Note that $h(n)\geq 0$.
Thus, there exists a subsequence $(m_i)_{i\in
    \bbn}$ of $(n_i)_{i\in \bbn}$ such that
$\lim_{i\to\infty}h(m_i)= c$, where $c$ is a nonnegative
constant or $\infty$. If $c<1$, we have
$\lim_{i\to\infty}\Pr(\bar A_{m_i}) = 0$ by
Lemma~\ref{lemma:TeoPrinc_Ranges}(i). If $c = 1$, we have
$\lim_{i\to\infty}\Pr(\bar A_{m_i}) = 0$ by
Lemma~\ref{lemma:TeoPrinc_Ranges}(ii). If $1< c <\infty$, we
have $\lim_{i\to\infty}\Pr(\bar A_{m_i}) = 0$ by
Lemma~\ref{lemma:TeoPrinc_Ranges}(iii). If $c = \infty$, we have
$\lim_{i\to\infty}\Pr(\bar A_{m_i}) = 0$ by
Lemma~\ref{lemma:TeoPrinc_Ranges}(iv). Thus, we may apply the subsubsequence principle and conclude that $\Pr(\bar A_n) = o(1)$.

  \begin{proof}[Proof of Lemma~\ref{lemma:TeoPrinc_Ranges}]
    Case (i): Suppose that $p \leq (\log (n) -g(n))/(n-1)$, for a
    function $g(n) = \omega(1)$. If we have two
    vertices with in-degree $0$, then $\uptau = \lambda = 0$. The
    expected number of vertices with in-degree $0$ is $n(1-p)^{n-1}
    \geq \exp(g(n)+o(1)) \to \infty$. So, by Chernoff's inequality
    (Theorem~\ref{teo:chernoff}), there are at least two vertices with
    in-degree $0$ a.a.s.

    For the remaining cases, let $\calp = (V_0,\dotsc, V_t)$ be a partition of
    $[n]$. We need to show that $\sum_{i=1}^{t} \din(V_i) \geq
    \lambda(t-1)$, since this is equivalent to proving that $\tau(\Dnp) \geq \lambda(\Dnp)$ by Theorem~\ref{teo:Frank}. 

    Case (ii): Fix $g(n)$ given in this case. Let $\phi$ and $\psi$ of Lemma~\ref{lemma:critical_bigSet}, and consider $\eps>0$ and $C>0$ given by it.
    Fix $\eta_1>0$ and $C_1$ as in
    Lemma~\ref{lemma:critical_smallSet} and let $\eta_2>0$ and $C_2$ be obtained by
    applying Lemma~\ref{lemma:out} to $\phi$. Let $\alpha = \min\{\eps,\eta_1,\eta_2\}$. We may assume that the number of vertices with
    in-degree zero is at most one, otherwise
    $\uptau=\lambda=0$ trivially.

    Suppose first that there exists $j>0$ such that $|V_j|\geq
    (1-\alpha)n$. Let $B = \bigcup_{i>0,\ i\neq j} V_i$. Note that the result is trivial if $B = \emptyset$, as $t-1=0$ in this case, thus suppose $|B| >0$.
    If $|V_0|\leq \phi n p$ or $|V_0|\leq |B|$, then $\din(V_j)+\din(B)
    =\Omega(p(n-1)|B|$ with probability $1-n^{-C}$ by Lemma~\ref{lemma:critical_bigSet}.
    Since $\lambda = O(\deltain)$ almost surely by Lemmas~\ref{lemma:lambda_sparsest}
    and~\ref{lem:lambda_critical} and $\deltain = o(\log n)$ by
    Corollary~\ref{lem:mindeg_sparse_o}, we have $\din(V_j)+\din(B)
    =\omega(\lambda|B|)$, so that
    $$\sum_{i=1}^{t} \din(V_i) \geq \din(V_j)+\din(B) \geq \lambda|B| \geq \lambda(t-1).$$
    So assume that $|V_0|> \phi np$ and $|V_0|> |B| \geq 1$. Let $I = \{i>0: |V_i|=1,\ \din(V_i)\leq \lambda \}$. Observe that 
    $\din(V_j) = \dout(V_0 \cup B)$ and by Lemma~\ref{lemma:out} we have $\din(V_j) =
    \dout(V_0 \cup B) \geq 2\deltain+1$, which is at least $\lambda$ \aas\ (see Lemmas~\ref{lemma:lambda_sparsest}
    and~\ref{lem:lambda_critical}).

    Moreover, for every $V_i\in \calp\setminus I$ for $i\neq j$, by
    Lemma~\ref{lemma:critical_smallSet}, $\din(V_i) \geq
    \max\{\deltain+1,2\deltain\}$, which is at least $\lambda$ \aas\
    Let $V(I) = \bigcup_{i\in I} V_i$. Then we have 
    \begin{equation*}
      \sum_{i=1}^t \din(V_i)
      \geq
      \lambda(t-|I|)+\sum_{i\in I} \din(V_i)
      =
      t\lambda  - \sum_{v \in V(I)} (\lambda-\din(v)) 
      \geq
      t\lambda -\lambda,
    \end{equation*}
    by the definition of $\lambda$.

    Next suppose that $|V_i|\leq (1-\alpha)n$ for all $i>0$. If $2\leq
    |V_i|\leq \alpha n$, then $\din(V_i)\geq
    \max\{\deltain+1,2\deltain\}$ by Lemma~\ref{lemma:critical_smallSet}
    and $\din(V_i)\geq \lambda$ \aas\ by Lemmas~\ref{lemma:lambda_sparsest}
    and~\ref{lem:lambda_critical}. If $|V_i|\geq \alpha n$ for some $i$, since we have $|\overline V_i|\geq
    \alpha n$ and so, by Lemma~\ref{lema:expansion_UpperRange_big} with $\zeta=\alpha$,
    $\din(V_i) = |A (\overline V_i,V_i)|   \geq \alpha^2 n^2p/2\geq \lambda$. Thus, \aas
    \begin{equation*}
       \sum_{i=1}^t \din(V_i)
      \geq
      \lambda(t-|I|)+\sum_{i\in I} \din(V_i)
      \geq
      t\lambda -\lambda,
    \end{equation*}
    again by the definition of $\lambda$.

      \noindent Case (iii): By Lemma~\ref{lema:expansion_minDegree},
      every set $S\subseteq[n]$ of size in $[2,n-2]$ has $\din(S)\geq
      1.5\deltain$ \aas{} By Lemma~\ref{lemma:LambdaUpcriticalConst} we have
      $\lambda\sim\deltain$ \aas, which implies that $\din(S)\geq \lambda$. Let
      $I = \{i>0\colon |V_i|=1,\ \din(V_i)\leq\lambda-1 \}$. If $|V_i|\leq n-2$ for all $i$, then \aas\
      $\din(V_i)\geq \lambda$ for all $i\not\in I$ and so
  \begin{equation*}
    \sum_{i=1}^t \din(V_i)
    \geq
    \lambda(t-|I|)+\sum_{i\in I} \din(V_i)
      \geq
      t\lambda -\lambda,
    \end{equation*}
    by the definition of $\lambda$.
    
    Now suppose, without loss of generality, that $|V_1|=n-1$ (the
    case $|V_1|=n$ is trivial). Then there is a single vertex
    $v\not\in V_1$. We can assume that $t = 2$ since the case $t=1$ is
    trivial. Thus, $V_2 = \{v\}$ and $\din(V_1)+\din(V_2) =
    \din(v)+\dout(v)\geq \deltain +\deltaout$. By
    Corollary~\ref{lem:mindeg_concentration} we have $\deltain
    +\deltaout \geq 1.5\deltain$ \aas{} By
    Lemma~\ref{lemma:LambdaUpcriticalConst}, we conclude that
    $\din(V_1)+\din(V_2) \geq 1.5\deltain \geq \lambda$ \aas

    \noindent Case (iv):  We may proceed as in the previous case, since, for every $S\subseteq[n]$ of size in $[2,n-2]$, we again have $\deltain \sim \lambda$ and $\din(S)\geq
    1.5\deltain\geq \lambda$ \aas\ (since $\deltain\sim (n-1)p$ \aas\
    by Lemma~\ref{lemma:concentration_degrees} and $\lambda \sim
    (n-1)p$ \aas\ by Corollary~\ref{corollary:lambda_dense}). This leads to the desired result with the above arguments if $|V_i| \leq n-2$ for every $i$. Otherwise, we use Lemma~\ref{lemma:concentration_degrees} to show that $\deltain+\deltaout \geq 1.5 \deltain$ \aas, and we may again repeat the analysis of the previous case.
%
\end{proof}

\bibliographystyle{amsplain}
\bibliography{bibliografia}
\end{document}